\title{\bf Dimension-Free Square Function Estimates for Dunkl Operators}
\author{Huaiqian Li\footnote{Email: {\color{blue}huaiqianlee@gmail.com}}
\quad Mingfeng Zhao\footnote{Email: {\color{blue}mingfeng.zhao@tju.edu.cn}}  \vspace{2mm}
\\
{\footnotesize Center for Applied Mathematics, Tianjin University, Tianjin 300072, P. R. China}
}
\date{}
\def\R{\mathbb{R}}
\def\E{\mathbb{E}}
\def\d{\textup{d}}
\def\Hess{\textup{Hess}}
\def\supp{\textup{supp}}
\def\RCD{\textup{RCD}}
\def\<{\langle}
\def\>{\rangle}
\def\Proof.{\noindent{\bf Proof. }}
\def\newdot{{\kern.8pt\cdot\kern.8pt}}
\newtheorem{theorem}{Theorem}[section]
\newtheorem{lemma}[theorem]{Lemma}
\newtheorem{corollary}[theorem]{Corollary}
\newtheorem{proposition}[theorem]{Proposition}
\newtheorem{example}[theorem]{Example}
\theoremstyle{definition}\newtheorem{remark}[theorem]{Remark}
\begin{document}
\allowdisplaybreaks
\maketitle
\makeatletter 
\renewcommand\theequation{\thesection.\arabic{equation}}
\@addtoreset{equation}{section}
\makeatother 

\begin{abstract}
Dunkl operators may be regarded as differential-difference operators parameterized by finite reflection groups and multiplicity functions. In this paper, the Littlewood--Paley square function for Dunkl heat flows in $\mathbb{R}^d$ is introduced by employing the full ``gradient'' induced by the corresponding carr\'{e} du champ operator and then the $L^p$ boundedness is studied for all $p\in(1,\infty)$. For $p\in(1,2]$, we successfully adapt Stein's heat flows approach to overcome the difficulty caused by the difference part of the Dunkl operator and establish the $L^p$ boundedness, while for $p\in[2,\infty)$, we restrict to a particular case when the corresponding Weyl group is isomorphic to $\mathbb{Z}_2^d$ and apply a probabilistic method to prove the $L^p$ boundedness. In the latter case, the curvature-dimension inequality for Dunkl operators in the sense of Bakry--Emery, which may be of independent interest, plays a crucial role. The results are dimension-free.
\end{abstract}

{\bf MSC 2010:} primary 42B25, 60G51; secondary 60J75, 60J60

{\bf Keywords:} Dunkl operator; Dunkl heat flow; Dunkl process; curvature-dimension condition; Littlewood--Paley square function

\section{Introduction and main results}\hskip\parindent
In this section, we first recall some basics on the Dunkl operator initially introduced by C.F. Dunkl in \cite{Dunkl1988,Dunkl1989}, and then we present the main results of this work. The Dunkl operator has been studied intensively since its introduction. For a general overview, refer to the survey papers \cite{Rosler2003,RosVoi2008,Anker2017}, as well as the books \cite{DunklXu2014,DX2015}.

Let $\R^d$ be endowed with the standard inner product $\langle\cdot,\cdot\rangle$ and the associated Euclidean norm $|\cdot|$. For $\alpha\in\R^d\setminus\{0\}$, let $H_\alpha$ be the hyperplane orthogonal to $\alpha$, i.e., $H_\alpha=\{x\in\R^d: \langle\alpha,x\rangle=0\}$, and denote $r_\alpha$ the reflection with respect to the hyperplane $H_\alpha$, which is a map from $\R^d$ to itself such that
$$r_\alpha x =x-2\frac{\langle \alpha,x\rangle}{|\alpha|^2}\alpha,\quad x\in\R^d.$$

A root system in $\R^d$ is a finite, nonempty subset of $\R^d\setminus\{0\}$, denoted by $\mathfrak{R}$, such that  for every $\alpha\in\mathfrak{R}$, $\mathfrak{R}\cap \alpha\R=\{\alpha,-\alpha\}$ and $r_\alpha(\mathfrak{R})=\mathfrak{R}$. Given such a root system $\mathfrak{R}$, denote $G$ the Weyl group (also celled reflection group or Coxeter group) generated by the reflections $\{r_\alpha:\alpha\in\mathfrak{R}\}$. It is well known that $G$ is a finite subgroup of the orthogonal group of $\R^d$.

The Weyl chambers associated to the root system $\mathfrak{R}$ are the connected components of $\{x\in\R^d: \langle\alpha,x\rangle\neq0\mbox{ for every }\alpha\in\mathfrak{R}\}=:W$. Given $y\in W$, we fix a positive subsystem $\mathfrak{R}_+ :=\{\alpha\in\mathfrak{R}: \langle\alpha,y\rangle>0\}$. Then, for every $\alpha\in \mathfrak{R}$, either $\alpha\in\mathfrak{R}_+$ or $-\alpha\in\mathfrak{R}_+$. In other words, $\mathfrak{R}$ can be written as the disjoint union of  subsystems $\mathfrak{R}_+$ and $-\mathfrak{R}_+$.

Let $\kappa_\cdot: \mathfrak{R}\rightarrow\mathbb{C}$ be a $G$-invariant function, i.e., $\kappa_{g\alpha}=\kappa_\alpha$ for every $g\in G$ and every $\alpha\in\mathfrak{R}$. We should mention that, due to the $G$-invariance of $\kappa$, the particular choice of $\mathfrak{R}_+$ makes no difference in the definition of Dunkl operators below. So we can fix any subsystem $\mathfrak{R}_+$ from now on.

Without loss of generality, we may normalize the root system such that $|\alpha|=\sqrt{2}$ for every $\alpha\in\mathfrak{R}$. For $\xi\in\R^d$, the Dunkl operator $D_\xi$ along $\xi$ associated to the root system $\mathfrak{R}$ and the function $\kappa$ is defined by
$$D_\xi f(x):=\partial_\xi f(x)+\sum_{\alpha\in\mathfrak{R}_+}\kappa_\alpha \langle\alpha,\xi\rangle \frac{f(x)-f(r_\alpha x)}{\langle\alpha,x\rangle},\quad f\in C^1(\R^d),\,x\in\R^d,$$
where $\partial_\xi$ denotes the directional derivative along $\xi$.

A remarkable property of Dunkl operators is the commutativity, i.e., for every $\xi,\eta\in\R^d$, $D_\xi\circ D_\eta=D_\eta\circ D_\xi$ (see \cite[Theorem 1.9]{Dunkl1989} or \cite[Theorem 6.4.8]{DunklXu2014}). Let $\{e_l: l=1,\cdots,d\}$ be the standard orthonormal basis of $\R^d$. For convenience, we write $D_l=D_{e_l}$, $l=1,\cdots,d$. We denote $\nabla_\kappa=(D_1,\cdots,D_d)$ the Dunkl gradient and $\Delta_\kappa=\sum_{l=1}^d D_l^2$ the Dunkl Laplacian. By \cite{Dunkl1989} or \cite{DunklXu2014}, we can express specifically that, for every $f\in C^2(\R^d)$,
\begin{eqnarray}\label{Delta}
	\Delta_\kappa f(x)=\Delta f(x)+2\sum_{\alpha\in\mathfrak{R}_+}\kappa_\alpha\Big(\frac{\langle\alpha,\nabla f(x)\rangle}{\langle\alpha,x\rangle} - \frac{f(x)-f(r_\alpha x)}{\langle\alpha,x\rangle^2}\Big),\quad x\in\R^d,
\end{eqnarray}
which indicates that $\Delta_\kappa$ is a differential-difference operator. It is easy to see that when $\kappa=0$, $D_\xi=\partial_\xi$, and $\nabla_0=\nabla$ and $\Delta_0=\Delta$ are the classical gradient operator and the Laplacian, respectively.

A typical example is the rank-one case.
\begin{example}\label{rank-one}
 Let $d=1$.  Then the only choice of the root system is $\mathfrak{R}=\{-\sqrt{2},\sqrt{2}\}$, the corresponding Weyl group is $G=\{{\rm id}, r\}$ with ${\rm id}(x)=x$ and $r(x)=-x$ for every $x\in\R$, and the multiplicity function is the constant function. Given a constant $\kappa\in\mathbb{C}$, the Dunkl operator $D=D_1$ is expressed as
\begin{equation}\label{1D-Dunkl}
Df(x)=f'(x)+\kappa\frac{f(x)-f(-x)}{x},\quad f\in C^1(\R),\, x\in\R,
\end{equation}
and the Dunkl Laplacian is written as
$$\Delta_\kappa f(x)=D^2f(x)=f''(x)+\frac{\kappa}{x^2}\big[f(-x)-f(x)+2xf'(x)\big],\quad f\in C^2(\R),\,x\in\R.$$
\end{example}

Another interesting example is the radial Dunkl process. See also \cite{RosVoi1998} and the recent \cite{GLR2018,GS2020} for more details. Here and below, $\mathbbm{1}$ denotes the constant function equal to $1$.
\begin{example}\label{Dyson-BM}
Let $C=\{x\in\R^d: \langle\alpha,x\rangle>0
\mbox{ for every }\alpha\in\mathfrak{R}_+\}$, and let $\overline{C}$ be its closure. The radial Dunkl process is defined as the $\overline{C}$-valued Markov process with continuous path, whose infinitesimal generator is given by
$$\Delta_\kappa^W f(x)=\Delta f(x)+2\sum_{\alpha\in\mathfrak{R}_+}\kappa_\alpha \frac{\langle\alpha,\nabla f(x)\rangle}{\langle\alpha,x\rangle},\quad x\in\R^d,$$
where $f$ belongs to $C^2(\overline{C})$ satisfying the boundary condition $\langle\alpha,\nabla f(x)\rangle=0$ for every $x\in H_\alpha,\,\alpha\in\mathfrak{R}_+$. Note that when the Weyl group $G=\mathbb{S}^{d-1}$, the symmetric group, and $\kappa=\mathbbm{1}$, $\Delta_\kappa^W$ is connected with the infinitesimal generator of the $d$-dimensional Dyson Brownian motion. In particular, when $\kappa=0$, $\Delta_\kappa^W$ is just the infinitesimal generator of the $d$-dimensional Brownian motion with reflection.
\end{example}

From now on, we assume that $\kappa\geq0$ and fix it. The natural weight function associated to the Dunkl operator is
$$\prod_{\alpha\in\mathfrak{R}_+}|\langle\alpha,x\rangle|^{2\kappa_\alpha}=:w_\kappa(x),\quad x\in\R^d,$$
which is a homogeneous function of degree $2\gamma$ with $\gamma=\sum_{\alpha\in\mathfrak{R}_+}\kappa_\alpha$ and also $G$-invariant. Obviously, $w_0=\mathbbm{1}$. For convenience, set $\d\mu_\kappa(x)=w_\kappa(x)\d x$. For each $p\in[1,\infty]$, we use $L^p(\mu_\kappa)$ and $\|\cdot\|_{L^p(\mu_\kappa)}$ to denote the classical $L^p$ space $L^p(\R^d,\mu_\kappa)$ and its $L^p$ norm, respectively.

The Dunkl Laplacian $\Delta_\kappa$ is essentially self-adjoint in $L^2(\mu_\kappa)$ (see e.g. \cite[Corollary 2.40]{RosVoi2008}). It generates the Dunkl heat flow $(H_\kappa(t))_{t\geq0}$ in $L^2(\mu_\kappa)$ as
$$H_\kappa(t) f(x)=e^{t\Delta_\kappa}f(x)=\int_{\R^d}h_\kappa(t,x,y)f(y)\,\d\mu_\kappa(y),\quad x\in\R^d,\,t>0,$$
and $H_\kappa(0)f=f$. Here $h_\kappa(t,x,y)$ is the Dunkl heat kernel, which is symmetric in $x$ and $y$, smooth in $(t,x,y)\in(0,\infty)\times\R^d\times\R^d$, positive, stochastically complete, i.e., 
\begin{equation}\label{stoch-complete}
\int_{\R^d} h_\kappa(t,x,y)\,\d\mu_\kappa(y)=1,\quad x\in\R^d,\,t>0,
 \end{equation} and satisfies the semigroup identity; see \cite[Section 4]{ADH2019} for more details on the Dunkl heat kernel and its estimates. It turns out that $(H_\kappa(t))_{t\geq0}$ is a strongly continuous contraction semigroup in $L^2(\mu_\kappa)$, and $H_\kappa(\cdot) f: (0,\infty)\rightarrow L^2(\mu_\kappa)$ is the unique continuously differentiable map, with values in the domain $\mathcal{D}(\Delta_\kappa)$, such that
\begin{eqnarray*}
\begin{cases}
\frac{\partial}{\partial t}H_\kappa(t)f=\Delta_\kappa H_\kappa(t) f,\quad&{\mbox{for }t\in (0,\infty)},\\
\lim_{t\rightarrow0^+}H_\kappa(t)f=f,\quad&{\mbox{in }L^2(\mu_\kappa)}.
\end{cases}
\end{eqnarray*}
Moreover, for $1\leq p<\infty$, $(H_\kappa(t))_{t\geq0}$ can be extended uniquely to a strongly continuous contraction semigroup in $L^p(\mu_\kappa)$, for which we keep the same notation. See \cite{RosVoi1998,Rosler2003,RosVoi2008} for more details. Furthermore, from \cite[Theorem 1 on page 67]{Stein70}, we see that $(H_\kappa(t))_{t\geq0}$ can be extended to an analytic semigroup in $L^p(\mu_\kappa)$ when $1<p<\infty$, and we also keep the notation the same. Indeed,  by \cite[Theorem 2.7]{Del2011}, $(H_\kappa(t))_{t\geq0}$ is a symmetric diffusion semigroup in the sense of \cite[page 65]{Stein70}.

As in the classical Laplacian case, we introduce the carr\'{e} du champ operator $\Gamma$ (see e.g. \cite{BE1985}): for $f,g\in C^2(\R^d)$,
$$\Gamma(f,g):=\frac{1}{2}\big[\Delta_\kappa(fg)-f\Delta_k g-g\Delta_\kappa f\big].$$
For convenience, set $\Gamma(f)=\Gamma(f,f)$. By a straightforward calculation, we get that, for every $f,g\in C^2(\R^d)$,
\begin{equation}\label{Gamma}
\Gamma(f,g)(x)=\langle\nabla f(x),\nabla g(x)\rangle + \sum_{\alpha\in\mathfrak{R}_+}\kappa_\alpha\frac{\big(f(x)-f(r_\alpha x)\big)\big(g(x)-g(r_\alpha x)\big)}{\langle\alpha,x\rangle^2},\quad x\in\R^d;
\end{equation}
see also \cite[Lemma 4.4]{GLR2018} and \cite[Lemma 3.1]{Velicu2020}.

For $f\in C_c^\infty(\R^d)$, define the Littlewood--Paley square function $g_\Gamma(f)$ by
\begin{eqnarray*}
g_\Gamma(f)(x)=\Big(\int_0^\infty \Gamma\big(H_\kappa(t)f\big)(x)\,\d t\Big)^{1/2},\quad x\in\R^d.
\end{eqnarray*}

The operator $g_\Gamma$, which is obviously nonlinear, is the major study object of the present work. Let $p\in (1,\infty)$. We say that the operator $g_\Gamma$ is bounded in $L^p(\mu_\kappa)$ if, there exists a positive constant $C_p$ such that
$$\|g_\Gamma(f)\|_{L^p(\mu_\kappa)}\leq C_p\|f\|_{L^p(\mu_\kappa)},\quad f\in L^p(\mu_\kappa).$$

With these preparations in hand,  we can present the main result of the paper in the following theorem. Let $|\mathfrak{R}_+|$ be the order of $\mathfrak{R}_+$.
\begin{theorem}\label{main-thm}
For $p\in(1,2]$, the operator $g_\Gamma$ is bounded in $L^p(\mu_\kappa)$. For $p\in [2,\infty)$, if, in addition, the Weyl group $G$ is isomorphic to $\mathbb{Z}_2^d$, then the operator $g_\Gamma$ is bounded in $L^p(\mu_\kappa)$. Moreover, the constant depends only on $p$ and $|\mathfrak{R}_+|$ in the former case, while,  the constant depends only on $p$ in the latter case.
\end{theorem}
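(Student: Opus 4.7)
My plan is to treat the two ranges of $p$ by quite different techniques, with the case $p=2$ serving as a common baseline. For $p=2$, since $\Delta_\kappa$ is essentially self-adjoint, integration by parts yields
\[
\int_{\R^d}\Gamma(H_\kappa(t)f)\,\d\mu_\kappa = -\int_{\R^d} H_\kappa(t)f\cdot\Delta_\kappa H_\kappa(t)f\,\d\mu_\kappa = -\tfrac{1}{2}\tfrac{\d}{\d t}\|H_\kappa(t)f\|_{L^2(\mu_\kappa)}^2,
\]
and integrating over $t\in(0,\infty)$ gives $\|g_\Gamma(f)\|_{L^2(\mu_\kappa)}^2=\tfrac{1}{2}\|f\|_{L^2(\mu_\kappa)}^2$.

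For $p\in(1,2)$, I would adapt Stein's heat-flow approach \cite{Stein70}. Writing $u(t,x):=H_\kappa(t)f(x)$, the definition of $\Gamma$ gives the key identity
\[
2\Gamma(u)=\Delta_\kappa(u^2)-2u\Delta_\kappa u=(\Delta_\kappa-\partial_t)(u^2),
\]
which plays the role of the subharmonicity of $u^2$ in Stein's classical argument. Testing against a dual function in $L^{p'}(\mu_\kappa)$, moving $\Delta_\kappa-\partial_t$ onto the dual side by integration by parts, and then closing the estimate via the maximal inequality for the symmetric Markov semigroup $(H_\kappa(t))_{t\geq 0}$ should yield the claim. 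The difficulty is that Stein's proof rests on the chain-rule identity $\Delta(F(u))=F'(u)\Delta u+F''(u)|\nabla u|^2$, which \emph{fails} for $\Delta_\kappa$ because of the difference term in \eqref{Delta}. My plan for overcoming this is to split $\Gamma$ into its classical gradient part $|\nabla u|^2$, where the chain rule still holds, and its reflection-difference part $\sum_{\alpha\in\mathfrak{R}_+}\kappa_\alpha(u-u\circ r_\alpha)^2/\langle\alpha,\cdot\rangle^2$, estimating the latter term-by-term by discrete integration-by-parts identities together with a Cauchy--Schwarz over $\alpha\in\mathfrak{R}_+$; the Cauchy--Schwarz is what produces the dependence of the final constant on $|\mathfrak{R}_+|$.

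For $p\in[2,\infty)$ with $G\cong\Z_2^d$, my plan is probabilistic. The $\Z_2^d$ hypothesis ensures that the reflections commute and that $\Delta_\kappa$ tensorizes across coordinate axes into independent rank-one Dunkl operators as in Example \ref{rank-one}. I would first establish a Bakry--Emery curvature-dimension inequality $\BE(0,\infty)$ for $\Delta_\kappa$ in this setting (by tensorization from the one-dimensional case), which yields the pointwise gradient bound $\Gamma(H_\kappa(t)f)\leq H_\kappa(t)\Gamma(f)$. Next, I would invoke the associated Dunkl Markov process $(X_t)_{t\geq0}$, whose continuous part is Brownian and whose jumps implement the reflections, together with an It\^{o} formula that identifies the quadratic variation of the martingale $M_s:=H_\kappa(t-s)f(X_s)$, $0\leq s\leq t$, with $\int_0^s\Gamma(H_\kappa(t-u)f)(X_u)\,\d u$. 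Applying the Burkholder--Davis--Gundy inequality and Doob's maximal inequality to $M$, integrating against the invariant measure $\mu_\kappa$, converting space-time $X$-integrals to deterministic ones via the curvature bound and letting $t\to\infty$ delivers $\|g_\Gamma(f)\|_{L^p(\mu_\kappa)}\leq C_p\|f\|_{L^p(\mu_\kappa)}$ with constant independent of dimension.

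The principal obstacle in the whole argument is the probabilistic step: precise identification of the quadratic variation of $X$ with the full form of $\Gamma$ in \eqref{Gamma}. On the continuous part of $X$ one naturally recovers the gradient piece $|\nabla H_\kappa(t-u)f|^2$, but matching the jump contributions to the reflection-difference terms $\kappa_\alpha(u-u\circ r_\alpha)^2/\langle\alpha,\cdot\rangle^2$ requires the jump measure of $X$ in explicit form, and it is exactly this compatibility that breaks down beyond $G\cong\Z_2^d$. This explains both why the probabilistic route imposes the Weyl-group restriction and why the curvature-dimension inequality must be proved in this restricted setting.
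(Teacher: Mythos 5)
Your treatment of $p\in[2,\infty)$ with $G\cong\mathbb{Z}_2^d$ is essentially the paper's argument: the curvature--dimension inequality of Proposition \ref{cur-dim}, the gradient bound \eqref{BL-grad}, the Dunkl process with its explicit jump kernel, the identification of the predictable quadratic variation with the time integral of $\Gamma$ (Lemma \ref{mart}), and the Burkholder--Davis--Gundy inequality. One caveat: ``tensorization from the rank-one case'' is not a black box here, because $\Delta_\kappa$ is not a diffusion generator and the mixed terms $\tfrac12[L_i\Gamma^{(j)}(f)-2\Gamma^{(j)}(L_if,f)]$ arising from the product structure are not automatically nonnegative. The paper must compute them and finds that they reduce to a positive multiple of the squared double difference $[f(x)-f(r_\alpha x)-f(r_\beta x)+f(r_\alpha r_\beta x)]^2$; it is precisely the orthogonality \eqref{ortho} that kills the remaining cross terms. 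So the $\mathbb{Z}_2^d$ restriction enters already in the analytic step, not only in matching the jumps of $X$ to the non-local part of $\Gamma$.

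The genuine gap is in the range $p\in(1,2)$. The identity $2\Gamma(u)=(\Delta_\kappa-\partial_t)(u^2)$ is the $p=2$ statement and cannot be dualized for $p<2$: the natural pairing would test $g_\Gamma(f)^2$ against a function in $L^{(p/2)'}(\mu_\kappa)$, but $p/2<1$, so there is no duality available; that route works only for $p>2$. Stein's argument for $p<2$ instead works with $u^p$, and what one needs is a \emph{pointwise} lower bound of the form $\Gamma(u)\lesssim u^{2-p}(\Delta_\kappa-\partial_t)(u^p)$, which is exactly where the difference part of \eqref{Delta} bites. The quantity ${\rm G}_p(u)=\tfrac1p u^{2-p}(\Delta_\kappa-\partial_t)(u^p)$ is nonnegative (Lemma \ref{Gamma-p-formula}) but does \emph{not} dominate $\Gamma(u)$ at the same point: when $u(r_\alpha x)>u(x)$ the weight $u^{2-p}(x)\big[(1-s)u(x)+su(r_\alpha x)\big]^{p-2}$ can be arbitrarily small, so the non-local part of ${\rm G}_p(u)(x)$ loses against the corresponding term of $\Gamma(u)(x)$. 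The paper's key observation (Lemma \ref{Gamma-comparison}) is that the missing contribution is recovered by evaluating ${\rm G}_p$ at the reflected points $r_\alpha x$ as well, i.e.\ $\Gamma(f)(x)\le\tfrac{1}{p-1}[\sum_{\alpha}{\rm G}_p(f)(r_\alpha x)+{\rm G}_p(f)(x)]$, after which the $G$-invariance of $\mu_\kappa$ folds the reflected evaluations back into $\|g_p(f)\|_{L^p(\mu_\kappa)}$; this is where the factor $|\mathfrak{R}_+|$ enters, not through a Cauchy--Schwarz over roots. Your proposed ``term-by-term discrete integration by parts plus Cauchy--Schwarz over $\alpha$'' does not supply this comparison, and without it the $(1,2)$ case does not close.
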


Some remarks are in order.
\begin{remark}\label{remark}
\begin{itemize}
\item[(i)] Let $f\in C_c^\infty(\R^d)$. Define the square function $g_{\nabla_\kappa}(f)$ as
\begin{equation*}\label{g-1}
g_{\nabla_\kappa}(f)(x)=\Big(\int_0^\infty |\nabla_\kappa H_\kappa(t)f|^2(x)\,\d t\Big)^{1/2},\quad x\in\R^d.
\end{equation*}
Then $g_{\nabla_\kappa}$ is bounded in $L^p(\mu_\kappa)$ provided that $g_\Gamma$ is bounded in $L^p(\mu_\kappa)$, since $g_{\nabla_\kappa}(f)$ is controlled pointwise by $g_\Gamma(f)$ due to the fact that\footnote{The authors are grateful to Professor Yacoub Chokri for pointing out the constant $1+2\gamma$ in \eqref{grad-Gamma}, which is better than $2\max\{1,2\gamma\}$ appeared in a former version of this manuscript.}
\begin{eqnarray}\label{grad-Gamma}
|\nabla_\kappa f|^2\leq (1+2\gamma)\Gamma(f).
\end{eqnarray}
 Indeed, for every $\alpha\in\mathfrak{R}_+$ and $x\in\R^d$, letting $(\delta_\alpha f)(x)=[f(x)-f(r_\alpha x)]/\langle \alpha,x\rangle$, we have
\begin{eqnarray*}
|\nabla_\kappa f|^2(x)
&=&\sum_{j=1}^d\Big|\partial_j f(x)+\sum_{\alpha\in \mathfrak{R}_+}\kappa_\alpha(\delta_\alpha f)(x)\alpha_j   \Big|^2\\
&=&\sum_{j=1}^d\Big[\big(\partial_j f(x)\big)^2+2\sum_{\alpha\in \mathfrak{R}_+}\kappa_\alpha \partial_j f(x) (\delta_\alpha f)(x)\alpha_j  +\Big(\sum_{\alpha\in \mathfrak{R}_+}\kappa_\alpha \alpha_j (\delta_\alpha f)(x)\Big)^2   \Big]\\
&\leq&|\nabla f|^2+\sum_{j=1}^d\sum_{\alpha\in \mathfrak{R}_+}\Big(2\kappa_\alpha\big(\partial_j f(x)\big)^2+\frac{\kappa_\alpha}{2}(\delta_\alpha f)^2(x)|\alpha_j|^2  \Big)\\
&&+2\Big(\sum_{\alpha\in \mathfrak{R}_+}\kappa_\alpha\Big)\Big(\sum_{\alpha\in \mathfrak{R}_+}\kappa_\alpha(\delta_\alpha f)^2(x)\Big)\\
&=&(1+2\gamma)\Gamma(f)(x).
\end{eqnarray*}

Also, if we define the square function $g_\nabla(f)$ as
\begin{equation*}\label{g-2}
g_\nabla(f)(x)=\Big(\int_0^\infty |\nabla H_\kappa(t)f|^2(x)\,\d t\Big)^{1/2},\quad x\in\R^d,
\end{equation*}
then $g_\nabla$ is a bounded operator in $L^p(\mu_\kappa)$ provided that $g_\Gamma$ is bounded in $L^p(\mu_\kappa)$, since $|\nabla f|^2(x)\leq\Gamma(f)(x)$ for every $x\in\R^d$, which obviously follows from \eqref{Gamma}.

\item[(ii)] We do not consider square functions defined by the Dunkl Poisson flow (see e.g. \cite{RosVoi1998}) $(P_\kappa(t))_{t\geq0}$, where $P_\kappa(t) :=e^{-t\sqrt{-\Delta_\kappa}}$, $t\geq0$.  The reason is that, if for every $f\in C_c^\infty(\R^d)$, define
    $$G_\Gamma(f)=\Big(\int_0^\infty t\Gamma\big(P_\kappa(t)f\big)(x)\,\d t\Big)^{1/2},\quad x\in\R^d,$$
    then $G_\Gamma(f)(x)\leq g_\Gamma(f)(x)$ for every $x\in\R^d$, which deduces in particular that the $L^p$ boundedness of $g_\Gamma$ implies the $L^p$ boundedness of $G_\Gamma$. Indeed, by applying the formula
$$e^{-t \sqrt{-\Delta_\kappa}}=\frac{1}{\sqrt{\pi}}\int_0^\infty  e^{\frac{t^2}{4u}\Delta_k} e^{-u} u^{-\frac{1}{2}}\,du,\quad t\geq0,$$
we derive that
\begin{align*}G_\Gamma(f)^2(x)
&=\int_0^\infty t\Gamma\Big(\int_0^\infty  e^{\frac{t^2}{4u}\Delta_\kappa}f(\cdot)e^{-u}u^{-\frac{1}{2}}\,\frac{\d u}{\sqrt{\pi}}\Big)(x)\,\d t\\
&\le \frac{1}{\sqrt{\pi}}\int_0^\infty t \int_0^\infty \Gamma\big(e^{\frac{t^2}{4u}\Delta_\kappa}f\big)(x)\,e^{-u}u^{-1/2}\,\d u \, \d t\\
&=\frac{1}{\sqrt{\pi}}\int_0^\infty \Big(\int_0^\infty  t\Gamma\big(e^{\frac{t^2}{4u}\Delta_\kappa}f\big)(x)\,\d t\Big)e^{-u}u^{-1/2}\,\d u\\
&=\frac{2}{\sqrt{\pi}} \Big(\int_0^\infty e^{-u}u^{1/2}\,\d u\Big)\Big(\int_0^\infty  \Gamma\big( e^{s\Delta_\kappa}f\big)(x)\,\d s\Big)\\
&=g_\Gamma(f)^2(x),
 \end{align*}
 where  we also used Jensen's inequality (see \eqref{Gamma} for the explicit expression of the bilinear operator $\Gamma$),  Fubini's theorem, the change-of-variables formula and the facts
$$\int_0^\infty e^{-u} u^{-1/2}\,du=\sqrt{\pi},\quad \int_0^\infty e^{-u} u^{1/2}\,du=\frac{\sqrt{\pi}}{2};$$
hence $G_\Gamma(f) \leq g_\Gamma(f) $.

Moreover, we can define $G_{\nabla_\kappa}$ and $G_{\nabla}$  similar as $g_{\nabla_\kappa}$ and $g_{\nabla}$, by employing the Dunkl Poisson flow instead of the Dunkl heat flow. Then, similar as in (i), the $L^p$ boundedness of $g_\Gamma$ implies the $L^p$ boundedness of both $G_{\nabla_\kappa}$ and $G_{\nabla}$.

\item[(iii)] In this work, we do not consider square functions defined by time derivatives such as
$$g_j(f):=\Big(\int_{\R^d}\Big|\frac{\partial^j}{\partial t^j}H_\kappa(t)f\Big|^2t^{2j-1}\,\d t\Big)^{1/2},\quad f\in C_c^\infty(\R^d),\,j=1,2,\cdots.$$
The reason is that, by the general result \cite[Corollary 1 on page 120]{Stein70}, it is well known that, for each $j=1,2,\cdots$, $g_j$ is bounded in $L^p(\mu_\kappa)$ for all $1<p<\infty$ with the constant independent of $d$, since $(H_\kappa(t))_{t>0}$ is a symmetric diffusion semigroup in the sense of E.M. Stein mentioned above. Clearly, the same result holds if $H_\kappa(t)$ is replaced by $P_\kappa(t)$.
\end{itemize}
\end{remark}

As a corollary of Theorem \ref{main-thm}, we summarize the results in Remark \ref{remark} (i)\&(ii) in the following corollary.
\begin{corollary}
For $p\in(1,2]$, $g_{\nabla_\kappa}$, $g_{\nabla}$, $G_{\Gamma}$, $G_{\nabla_\kappa}$ and $G_{\nabla}$ are bounded in $L^p(\mu_\kappa)$. For $p\in[2,\infty)$, if $G$ is isometric to $\mathbb{Z}_2^d$, then $g_{\nabla_\kappa}$, $g_{\nabla}$, $G_{\Gamma}$, $G_{\nabla_\kappa}$ and $G_{\nabla}$ are bounded in $L^p(\mu_\kappa)$. In each case, the constant is dimension-free.
\end{corollary}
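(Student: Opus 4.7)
The plan is to deduce the corollary directly from Theorem \ref{main-thm} by combining it with the pointwise comparisons already recorded in Remark \ref{remark}. First I would invoke Theorem \ref{main-thm} to obtain the $L^p(\mu_\kappa)$ boundedness of the reference operator $g_\Gamma$ in both ranges: for $p\in(1,2]$ unconditionally, and for $p\in[2,\infty)$ under the assumption $G\cong\mathbb{Z}_2^d$. Everything else then follows from pointwise domination together with Minkowski/monotonicity, so the constants inherit the dimension-free dependence from Theorem \ref{main-thm}.

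For the heat-flow square functions $g_{\nabla_\kappa}$ and $g_\nabla$, I would apply the pointwise inequalities $|\nabla_\kappa f|^2\le(1+2\gamma)\Gamma(f)$ (derived in Remark \ref{remark}(i) via Cauchy--Schwarz and expansion of the square) and $|\nabla f|^2\le\Gamma(f)$ (which is immediate from the expression \eqref{Gamma}) to the function $H_\kappa(t)f$, integrate in $t$ over $(0,\infty)$, and take square roots to conclude
\[
g_{\nabla_\kappa}(f)(x)\le\sqrt{1+2\gamma}\,g_\Gamma(f)(x),\qquad g_\nabla(f)(x)\le g_\Gamma(f)(x),
\]
for every $x\in\R^d$, which transfers the $L^p$ bound to both operators.

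For the Poisson counterparts $G_\Gamma$, $G_{\nabla_\kappa}$ and $G_\nabla$, I would use the subordination calculation already carried out in Remark \ref{remark}(ii): writing $P_\kappa(t)=\tfrac{1}{\sqrt{\pi}}\int_0^\infty e^{(t^2/4u)\Delta_\kappa}e^{-u}u^{-1/2}\d u$, exploiting the bilinearity of $\Gamma$ together with Jensen's inequality, Fubini's theorem and the substitution $s=t^2/(4u)$ gives
\[
G_\Gamma(f)(x)\le g_\Gamma(f)(x),\qquad x\in\R^d.
\]
The very same subordination formula, combined with $|\nabla_\kappa(\cdot)|^2$ and $|\nabla(\cdot)|^2$ in place of $\Gamma(\cdot)$ (both of which satisfy a pointwise Cauchy--Schwarz/Jensen inequality), gives $G_{\nabla_\kappa}(f)\le g_{\nabla_\kappa}(f)$ and $G_\nabla(f)\le g_\nabla(f)$ pointwise; the previous paragraph then upgrades these to the desired $L^p(\mu_\kappa)$ bounds.

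There is essentially no hard step here: the substantive work is already contained in Theorem \ref{main-thm}, and the corollary is a bookkeeping consequence. The only point worth emphasizing is the propagation of the dimension-free character of the constants. In the range $p\in(1,2]$, the constant for $g_\Gamma$ depends only on $p$ and $|\mathfrak{R}_+|$, and each of the pointwise majorizations introduces only the extra factor $\sqrt{1+2\gamma}$ (with $\gamma=\sum_{\alpha\in\mathfrak{R}_+}\kappa_\alpha$) or $1$; in the range $p\in[2,\infty)$ with $G\cong\mathbb{Z}_2^d$, the constant for $g_\Gamma$ depends only on $p$, and the same bookkeeping applies. Hence all five derived operators inherit bounds that depend only on $p$ (and on $\mathfrak{R}_+$, $\kappa$) but not on $d$ explicitly, which is what is meant by dimension-free.
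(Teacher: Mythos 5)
Your argument is correct and is essentially the paper's own proof: the paper derives the corollary precisely by combining Theorem \ref{main-thm} with the pointwise bounds $|\nabla_\kappa f|^2\le(1+2\gamma)\Gamma(f)$, $|\nabla f|^2\le\Gamma(f)$ and the subordination estimate $G_\Gamma(f)\le g_\Gamma(f)$ recorded in Remark \ref{remark}(i)--(ii). The only cosmetic difference is that the paper passes from $G_{\nabla_\kappa}$, $G_\nabla$ to $G_\Gamma$ and then to $g_\Gamma$, whereas you subordinate directly to $g_{\nabla_\kappa}$, $g_\nabla$; both routes are valid and give the same dimension-free constants.
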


It is well known that square functions are important in harmonic analysis and probability theory. For the classic theory of Littlewood--Paley square functions and its applications in multiplier theory, Sobolev spaces and Hardy spaces, refer to \cite{Stein1958,St1970,Stein70}. Despite extensive studies in various settings in literature, we concentrate on square function estimates in the Dunkl setting. For $p\in(1,2]$, the $L^p$ boundedness of the square function $G_\nabla$ for the Dunkl Poisson flow was obtained in \cite{Soltani2005} on $\R$ and in \cite{FSoltani2005} on $\R^d$, respectively. By establishing Banach space valued singular integral theory, for all $p\in(1,\infty)$, the $L^p$ boundedness of $G_{\nabla_\kappa}$  for the Dunkl poisson flow on $\R^d$ was obtained in \cite{AmSi2012}. Recently, for all $p\in(1,\infty)$, the $L^p$ boundedness of $G_\Gamma$ for the Dunkl poisson flow on $\R$ was obtained in \cite{LZL2017}, where the approach is based on a deep result from the theory of Hilbert space valued singular integrals. After submission of the present manuscript, in \cite{DH2020}, the authors considered square functions in their full generality and proved the $L^p$ boundedness for all $p\in(1,\infty)$ by using vector valued Cader\'{o}n--Zygmund theory. In contrast to the $L^p$ boundedness, in a recent work \cite{Li2020}, the first named author proved weak $(1,1)$ boundedness of $g_\Gamma$ by employing estimates of the Dunkl heat kernel and its derivatives.  We should point out that all of the above mentioned results are dimension dependent.

Our approach to prove Theorem \ref{main-thm} when $p\in (1,2]$ is motivated by the recent paper \cite{LiWang16} which deals with $L^p$ boundedness for square functions in the setting of Dirichlet forms of pure jump type in metric measure spaces. However, for $p\in(1,2]$,  in general, it is not possible to show the $L^p$ boundedness of the corresponding $g_\Gamma$ for Dirichlet forms of pure jump type; see \cite[EXAMPLE 2]{BBL2016} for a counterexample constructed by the $\alpha$-stable process with $\alpha=1/2$. In contrast to that, the Dunkl setting provides an interesting example of non-local nature such that $g_\Gamma$ is $L^p$-bounded for all $p\in(1,2]$. For $p\in[2,\infty)$, in general, although the Dunkl operator can be regarded as a non-local operator, it seems that we are not able to prove the $L^p$ boundedness of $g_\Gamma$ by applying the methods in \cite{LiWang16,BBL2016}. Instead, we restrict to the setting when the Weyl group $G$ is isomorphic to $\mathbb{Z}_2^d$. In this particular case, we can deal with the Dunkl process as a diffusion process.

We should mention that, to prove our results, we do not employ directly estimates of Gauss type for the Dunkl heat kernel and estimates on its time and space derivatives (see e.g. \cite[Theorem 2.2]{DH2020}) and the doubling property. We emphasize that our results are dimension-free.

Potential applications of the results on Dunkl multipliers and weighted $L^p$ boundedness of the above square functions are interesting subjects to be considered in future studies.

The remainder of the paper is organized as follows. The next two sections contain proofs of Theorem \ref{main-thm}. Section 2 serves to prove the case when $p\in(1,2]$, and Section 3 deals with the case when $p\in[2,\infty)$ and the Weyl group $G$ is isomorphic to $\mathbb{Z}_2^d$, where the curvature-dimension condition (see Proposition \ref{cur-dim} below) is employed. In Section 4, details on the proof of Proposition \ref{cur-dim} are presented. For a function $f$, let $f^+:=\max\{f,0\}$ and $f^-:=(-f)^+$. 
The size of constants appeared in proofs are trackable.

\section{$L^p$ boundedness for $p\in(1,2]$}\hskip\parindent
In this section, we establish $L^p$ boundedness for $g_\Gamma$  in $L^p(\mu_\kappa)$ for all $1<p\leq 2$. We should mention that the idea of proof below is motivated by \cite[Section 2]{LiWang16} (see also \cite{Nick08} for the graph case), which may be regarded as a development of Stein's method in \cite{Stein70} for non-local operators.

Let $p\in(1,2]$. We introduce the pseudo-gradient ${\rm G}_p$ as follows:
$${\rm G}_p(f):=\frac{1}{p}\big[f^{2-p}\Delta_\kappa(f^p)-pf\Delta_\kappa f\big],$$
for some suitable nonnegative function $f$ defined on $\R^d$.

The next lemma provides an explicit expression for ${\rm G}_p(f)$; see also \cite[Lemmas 4.3 and 4.4]{GLR2018} for potentially related results. Let $0^0:=1$.
\begin{lemma}\label{Gamma-p-formula}
For $p\in(1,2]$, $0\leq f\in C_c^\infty(\R^d)$ and $x\in\R^d$, we have
\begin{eqnarray*}{\rm G}_p(f)(x)&=&(p-1)|\nabla f|^2(x)+2(p-1)\sum_{\alpha\in\mathfrak{R}_+}\frac{\kappa_\alpha}{\langle\alpha,x\rangle^2}\mathbbm{1}_{\{y\in\R^d: f(y)\neq f(r_\alpha y)\}}(x)\times\\
&&\big[f(r_\alpha x)-f(x)\big]^2\int_0^1\frac{f^{2-p}(x)(1-s)}{\big[(1-s)f(x)+sf(r_\alpha x) \big]^{2-p}}\,\d s.
\end{eqnarray*}
\end{lemma}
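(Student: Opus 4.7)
The plan is to compute $G_p(f)$ directly from the definition, using the explicit formula \eqref{Delta} for $\Delta_\kappa$, and then to recognize the resulting algebraic expression as a second-order Taylor remainder. I would split $\Delta_\kappa = \Delta + L$, where $L$ collects the reflection terms
$$Lh(x) = 2\sum_{\alpha\in\mathfrak{R}_+}\kappa_\alpha\Big(\frac{\langle\alpha,\nabla h(x)\rangle}{\langle\alpha,x\rangle} - \frac{h(x)-h(r_\alpha x)}{\langle\alpha,x\rangle^2}\Big),$$
and write correspondingly $pG_p(f) = \bigl[f^{2-p}\Delta(f^p) - pf\Delta f\bigr] + \bigl[f^{2-p}L(f^p) - pf\,Lf\bigr]$.

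The classical Laplacian contribution is standard: on $\{f>0\}$ the chain rule gives $\Delta(f^p) = pf^{p-1}\Delta f + p(p-1)f^{p-2}|\nabla f|^2$, so $f^{2-p}\Delta(f^p)-pf\Delta f = p(p-1)|\nabla f|^2$, producing the first summand $(p-1)|\nabla f|^2$. Inside $L(f^p)$ and $Lf$ the first-order gradient pieces cancel exactly, because $\nabla(f^p)=pf^{p-1}\nabla f$ forces $f^{2-p}\langle\alpha,\nabla(f^p)\rangle = pf\langle\alpha,\nabla f\rangle$ pointwise. Only the pure difference pieces remain, and for each $\alpha\in\mathfrak{R}_+$ their contribution to $pG_p(f)(x)$ is
$$\frac{2\kappa_\alpha}{\langle\alpha,x\rangle^2}\Big[(p-1)f(x)^2 + f(x)^{2-p}f(r_\alpha x)^p - pf(x)f(r_\alpha x)\Big].$$

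To identify this bracket with the integral expression in the lemma, I would set $a=f(x)$, $b=f(r_\alpha x)$ and apply Taylor's formula with integral remainder to $\phi(s):=[(1-s)a+sb]^p$ on $[0,1]$. Since $\phi''(s)=p(p-1)(b-a)^2[(1-s)a+sb]^{p-2}$, this yields
$$b^p - a^p - pa^{p-1}(b-a) = p(p-1)(b-a)^2\int_0^1(1-s)\bigl[(1-s)a+sb\bigr]^{p-2}\,\d s.$$
Multiplying through by $a^{2-p}$ and rearranging gives $(p-1)a^2 + a^{2-p}b^p - pab = p(p-1)(b-a)^2\int_0^1 \frac{a^{2-p}(1-s)}{[(1-s)a+sb]^{2-p}}\,\d s$, and dividing by $p$ produces exactly the claimed identity.

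The one delicate point, which I expect to be the main place requiring care, is the behavior at zeros of $f$, since $t\mapsto t^p$ is only $C^1$ at $0$ for $p\in(1,2)$. When $f(x)=f(r_\alpha x)$ the $\alpha$-term of the bracket vanishes identically, so inserting the indicator $\mathbbm{1}_{\{f\neq f\circ r_\alpha\}}$ is harmless. When $a=0$ and $b>0$, the integrand is $0$ under the convention $0^0=1$, and the chain-rule derivation survives because a nonnegative $C^2$ function satisfies $\nabla f=0$ at each of its zeros. When $b=0$ and $a>0$, the integrand reduces to $(1-s)^{p-1}$, and direct substitution matches both sides. In short, after establishing the identity on $\{a,b>0\}$ via Taylor, the convention $0^0=1$ and continuous extension handle the remaining points, completing the proof.
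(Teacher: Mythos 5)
Your proposal is correct and follows essentially the same route as the paper: a direct computation of $p{\rm G}_p(f)$ from the explicit formula \eqref{Delta} (in which the first-order reflection terms cancel and the Euclidean part yields $p(p-1)|\nabla f|^2$), followed by the same Taylor identity $t^p-s^p-ps^{p-1}(t-s)=p(p-1)(t-s)^2\int_0^1(1-v)[(1-v)s+vt]^{p-2}\,\d v$ applied with $s=f(x)$, $t=f(r_\alpha x)$. Your extra care about the zeros of $f$ is welcome but not a departure from the paper's argument.
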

\begin{proof}
By direct calculations, we get
\begin{eqnarray*}
p{\rm G}_p(f)(x)&=&[f^{2-p}(x)\Delta_\kappa(f^p)(x)-pf(x)\Delta_\kappa (f)(x)]\\
&=&p(p-1)|\nabla f|^2(x)+ 2\sum_{\alpha\in\mathfrak{R}_+}\frac{\kappa_\alpha}{\langle\alpha, x\rangle^2}f^{2-p}(x)\big[\big(f^p(r_\alpha x)-f^p(x)\big)\\
&&-pf^{p-1}(x)\big(f(r_\alpha x)-f(x)\big)\big].
\end{eqnarray*}

By Taylor's expression of the function $t\mapsto t^p$ at the point $s$, and then by the change-of-variables formula, we have
\begin{eqnarray*}
t^p-s^p-ps^{p-1}(t-s)&=&p(p-1)\int_s^t u^{p-2}(t-u)\,\d u\\
&=&p(p-1)(t-s)^2\int_0^1 \frac{1-v}{[(1-v)s + vt]^{2-p}}\,\d v,
\end{eqnarray*}
for $s,t\geq0$ with $s\neq t$. Thus, if $f(r_\alpha x)\neq f(x)$, then letting $s=f(x)$ and $t=f(r_\alpha x)$, we finish the proof.
\end{proof}

From Lemma \ref{Gamma-p-formula}, we derive the following result which implies that $\Gamma(f)$ and ${\rm G}_p(f)$ are comparable in some pointwise sense.
\begin{lemma}\label{Gamma-comparison}
For $p\in(1,2]$, $0\leq f\in C_c^\infty(\R^d)$, there holds that
\begin{eqnarray}\label{Gamma-comparison-1}
\Gamma(f)(x)\geq \frac{1}{p-1}{\rm G}_p(f)(x)\geq0,\quad x\in\R^d,
\end{eqnarray}
and
\begin{eqnarray}\label{Gamma-comparison-2}
\Gamma(f)(x)\leq \frac{1}{p-1} \Big[\sum_{\alpha\in\mathfrak{R}_+}{\rm G}_p(f)(r_\alpha x) + {\rm G}_p(f)(x)\Big].
\end{eqnarray}
\end{lemma}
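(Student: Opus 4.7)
The plan is to combine the explicit formula for ${\rm G}_p(f)$ from Lemma \ref{Gamma-p-formula} with the expression \eqref{Gamma} for $\Gamma(f)$. Both quantities split naturally into a ``gradient'' piece ($|\nabla f|^2$ for $\Gamma$ and $(p-1)|\nabla f|^2$ for ${\rm G}_p$) plus a sum over $\alpha \in \mathfrak{R}_+$ of contributions depending only on the pair $(f(x), f(r_\alpha x))$, so both \eqref{Gamma-comparison-1} and \eqref{Gamma-comparison-2} reduce to verifying the corresponding pointwise inequality on each piece separately. The gradient parts cause no trouble: for \eqref{Gamma-comparison-1} one obtains exact equality after multiplying by $1/(p-1)$, while for \eqref{Gamma-comparison-2} the right-hand side picks up the extra nonnegative contribution $\sum_{\alpha}|\nabla f(r_\alpha x)|^2$.

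What remains is to analyze each $\alpha$-indexed difference term. Setting $a := f(x)$ and $b := f(r_\alpha x)$, and using the identity $\langle\alpha, r_\alpha x\rangle^2 = \langle\alpha, x\rangle^2$, the argument reduces entirely to controlling the integral
\[
J(a, b) \;:=\; \int_0^1 \frac{a^{2-p}(1-s)}{[(1-s)a + sb]^{2-p}}\,\d s,
\]
together with its ``reflected'' version $J(b, a)$ arising from the point $r_\alpha x$. For \eqref{Gamma-comparison-1}, I would prove the upper bound $J(a, b) \leq 1/p$: since $b \geq 0$ and $2 - p \geq 0$, the estimate $(1-s)a + sb \geq (1-s)a$ gives $a^{2-p}/[(1-s)a + sb]^{2-p} \leq (1-s)^{p-2}$, and integrating yields $J(a, b) \leq \int_0^1 (1-s)^{p-1}\,\d s = 1/p$. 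Combined with the factor $2(p-1)$ produced by Lemma \ref{Gamma-p-formula} and the inequality $2(p-1)/p \leq 1$ valid for $p \in (1,2]$, this gives the desired termwise estimate.

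For \eqref{Gamma-comparison-2}, the strategy is instead to establish the lower bound $J(a, b) + J(b, a) \geq 1/2$ by a short case analysis. When $a \geq b > 0$, the opposite inequality $(1-s)a + sb \leq a$ forces $a^{2-p}/[(1-s)a+sb]^{2-p} \geq 1$, and hence $J(a, b) \geq \int_0^1 (1-s)\,\d s = 1/2$; the case $b \geq a > 0$ is symmetric. The degenerate situations $a = 0$ or $b = 0$ are handled directly: if $a = 0$ then $J(a, b) = 0$, but $J(b, 0) = \int_0^1 (1-s)^{p-1}\,\d s = 1/p \geq 1/2$ for $p \leq 2$, and similarly if $b = 0$. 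The main technical obstacle I anticipate is precisely keeping track of these degenerate cases, since the factor $f^{2-p}(x)$ may vanish and the indicator $\mathbbm{1}_{\{y\,:\,f(y) \neq f(r_\alpha y)\}}$ in Lemma \ref{Gamma-p-formula} must be respected when invoking the integral representation. Once the pointwise bounds on $J(a, b)$ and $J(a, b) + J(b, a)$ are in place, summing over $\alpha \in \mathfrak{R}_+$ and recombining with the gradient parts completes both inequalities.
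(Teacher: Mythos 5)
Your treatment of \eqref{Gamma-comparison-2} and of the nonnegativity ${\rm G}_p(f)\geq 0$ is correct and is essentially the paper's own argument: after reducing to a termwise statement via Lemma \ref{Gamma-p-formula}, the whole content is the lower bound $J(a,b)\geq 1/2$ for whichever of $a=f(x)$, $b=f(r_\alpha x)$ is the larger, with the reflected integral $J(b,a)$ supplied by the $\alpha$-term of ${\rm G}_p(f)(r_\alpha x)$ (all other contributions to ${\rm G}_p(f)(r_\alpha x)$, including $|\nabla f(r_\alpha x)|^2$ and the cross terms indexed by $\beta\neq\alpha$, being discarded as nonnegative, and using $\langle\alpha,r_\alpha x\rangle^2=\langle\alpha,x\rangle^2$). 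Your handling of the degenerate cases $a=0$ or $b=0$ is also fine.

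Your proof of the first inequality in \eqref{Gamma-comparison-1}, however, does not close. Termwise, what must be shown is $\frac{1}{p-1}\cdot 2(p-1)\,J(a,b)=2J(a,b)\leq 1$, i.e.\ $J(a,b)\leq 1/2$; your bound $J(a,b)\leq 1/p$ only yields $2J(a,b)\leq 2/p$, which exceeds $1$ for every $p<2$. The bookkeeping in your second paragraph silently drops the prefactor $1/(p-1)$ that you correctly retained for the gradient piece: the chain $2(p-1)J(a,b)\leq 2(p-1)/p\leq 1$ proves ${\rm G}_p(f)\leq\Gamma(f)$, not $\frac{1}{p-1}{\rm G}_p(f)\leq\Gamma(f)$. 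Worse, the gap is not repairable, because $J(a,b)\geq 1/2$ whenever $a\geq b$ (the integrand of $J(a,b)$ is then at least $1-s$), with strict inequality when $a>b$ and $p<2$. Concretely, in the rank-one case with $p=3/2$, $f\geq0$, $f(x)=1$, $f(-x)=0$, $f'(x)=f'(-x)=0$, one has $J(1,0)=2/3$ and hence $\frac{1}{p-1}{\rm G}_p(f)(x)=\frac{2\kappa}{3x^2}>\frac{\kappa}{2x^2}=\Gamma(f)(x)$; so the printed inequality $\Gamma(f)\geq\frac{1}{p-1}{\rm G}_p(f)$ is in fact false. (The paper's own proof of this half contains the same defect: it derives the bound $\frac12\big(f(x)\wedge f(r_\alpha x)\big)^{p-2}$ for the inner integral but then substitutes $\big(f(x)\vee f(r_\alpha x)\big)^{p-2}$.) What your argument genuinely establishes is the weaker, correct statement $\Gamma(f)\geq{\rm G}_p(f)\geq0$; since only ${\rm G}_p\geq0$ and \eqref{Gamma-comparison-2} are invoked later (in Proposition \ref{Lp-bound-g_p} and Theorem \ref{Gamma-LPS-small-p}), nothing downstream is affected, but you should either prove only the corrected statement or flag the discrepancy explicitly.
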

\begin{proof} (1) We prove \eqref{Gamma-comparison-1}. If $f(x)>f(r_\alpha x)\geq0$, then $(1-s)f(x)+sf(r_\alpha x)\geq f(r_\alpha x)$, $s\in[0,1]$, and hence
\begin{eqnarray*}
\int_0^1\frac{1-s}{\big[(1-s)f(x)+sf(r_\alpha x)\big]^{2-p}}\,\d s\leq f(r_\alpha x)^{p-2}\int_0^1 (1-s)\,\d s=\frac{1}{2}f(r_\alpha x)^{p-2}.
\end{eqnarray*}
If $f(r_\alpha x)>f(x)\geq0$, then $(1-s)f(x)+sf(r_\alpha x)\geq f(x)$, $s\in[0,1]$, and hence
\begin{eqnarray*}
\int_0^1\frac{1-s}{\big[(1-s)f(x)+sf(r_\alpha x)\big]^{2-p}}\,\d s\leq f(x)^{p-2}\int_0^1 (1-s)\,\d s=\frac{1}{2}f(x)^{p-2}.
\end{eqnarray*}
Thus, together with Lemma \ref{Gamma-p-formula}, we derive that
\begin{eqnarray*}
{\rm G}_p(f)(x)&\leq&(p-1)|\nabla f|^2(x)+(p-1)\sum_{\alpha\in\mathfrak{R}_+}\frac{\kappa_\alpha}{\langle\alpha,x\rangle^2}f^{2-p}(x)\times\\
&&\big(f(x)-f(r_\alpha x)\big)^2\big(f(x)\vee f(r_\alpha x)\big)^{p-2}\\
&\leq&(p-1)\Gamma(f)(x).
\end{eqnarray*}
It is immediate to see that ${\rm G}_p(f)\geq0$ from Lemma \ref{Gamma-p-formula}. Thus, \eqref{Gamma-comparison-1} is proved.

(2) Now we prove \eqref{Gamma-comparison-2}. Let
\begin{eqnarray*}
{\rm I}&=&\sum_{\alpha\in\mathfrak{R}_+}\kappa_\alpha\frac{\big(f(x)-f(r_\alpha x)\big)^2}{\langle\alpha,x\rangle^2}\mathbbm{1}_{\{y\in\R^d: f(r_\alpha y)<f(y)\}}(x),\\
{\rm II}&=&\sum_{\alpha\in\mathfrak{R}_+}\kappa_\alpha\frac{\big(f(x)-f(r_\alpha x)\big)^2}{\langle\alpha,x\rangle^2}\mathbbm{1}_{\{y\in\R^d: f(r_\alpha y)>f(y)\}}(x).
\end{eqnarray*}

For $f(r_\alpha x)<f(x)$, $(1-s)f(x)+ sf(r_\alpha x)\leq f(x)$, $s\in[0,1]$. Then
$$\int_0^1\frac{f^{2-p}(x)(1-s)}{\big[(1-s)f(x)+sf(r_\alpha x) \big]^{2-p}}\,\d s\geq\int_0^1(1-s)\,\d s=\frac{1}{2}.$$
Hence, by Lemma \ref{Gamma-p-formula},
\begin{eqnarray}\label{lemma2.2-2}
{\rm I}&\leq&  2\sum_{\alpha\in\mathfrak{R}_+}\kappa_\alpha\frac{\big(f(x)-f(r_\alpha x)\big)^2}{\langle\alpha,x\rangle^2} \mathbbm{1}_{\{y\in\R^d: f(r_\alpha y)<f(y)\}}(x)\cr
&&\times \int_0^1\frac{f^{2-p}(x)(1-s)}{\big[(1-s)f(x)+sf(r_\alpha x) \big]^{2-p}}\,\d s\cr
&\leq&\frac{1}{p-1}\big[{\rm G}_p(f)(x)-(p-1)|\nabla f|^2(x)\big].
\end{eqnarray}

For $f(r_\alpha x)>f(x)$, $(1-s)f(x)+ sf(r_\alpha x)\leq f(r_\alpha x)$, $s\in[0,1]$. Then
$$\int_0^1\frac{f^{2-p}(r_\alpha x)(1-s)}{\big[(1-s)f(r_\alpha x)+sf(x) \big]^{2-p}}\,\d s\geq\int_0^1(1-s)\,\d s=\frac{1}{2}.$$
Hence, since $\langle\alpha,r_\alpha x\rangle=-\langle\alpha,x\rangle$ for every $\alpha\in\mathfrak{R}_+$, by Lemma \ref{Gamma-p-formula} again, we have
\begin{eqnarray}\label{lemma2.2-3}
{\rm II}&\leq&2\sum_{\alpha\in\mathfrak{R}_+}\kappa_\alpha\frac{\big(f(r_\alpha x)-f(x)\big)^2}{\langle\alpha,r_\alpha x\rangle^2}\mathbbm{1}_{\{y\in\R^d: f(r_\alpha y)>f(y)\}}(x)\cr
 &&\times\int_0^1\frac{f^{2-p}(r_\alpha x)(1-s)}{\big[(1-s)f(r_\alpha x)+sf(x) \big]^{2-p}}\,\d s\cr
&\leq&\frac{1}{p-1}\sum_{\alpha\in\mathfrak{R}_+}\big[{\rm G}_p(f)(r_\alpha x)-(p-1)|\nabla f|^2(r_\alpha x)\big].
\end{eqnarray}

Thus, combining \eqref{lemma2.2-2} and \eqref{lemma2.2-3}, we finally arrive at
\begin{eqnarray*}\Gamma(f)(x)&=&|\nabla f|^2(x)+{\rm I}+{\rm II}\\
&\leq&\frac{1}{p-1}\Big[\sum_{\alpha\in\mathfrak{R}_+}{\rm G}_p(f)(r_\alpha x) + {\rm G}_p(f)(x)\Big],
\end{eqnarray*}
which is \eqref{Gamma-comparison-2}.
\end{proof}

Recall that $(H_\kappa(t))_{t\geq0}$ is the Dunkl heat flow. For every $0\leq f\in C_c^\infty(\R^d)$, define the square function $g_p(f)$ by
\begin{eqnarray*}
g_p(f)(x)=\Big(\int_0^\infty{\rm G}_p\big(H_\kappa(t)f\big)(x)\,\d t\Big)^{1/2},\quad  x\in\R^d.
\end{eqnarray*}

The next result is on the $L^p$ boundedness of the operator $g_p$.
\begin{proposition}\label{Lp-bound-g_p}
Let $p\in(1,2]$. Then there exists a constant $c_p\in(0,\infty)$, depending only on $p$, such that for all $0\leq f\in C_c^\infty(\R^d)$,
\begin{eqnarray}\label{Lp-bound-g_p-1}
\|g_p(f)\|_{L^p(\mu_\kappa)}\leq c_p\|f\|_{L^p(\mu_\kappa)},
\end{eqnarray}
and, moreover,
\begin{eqnarray}\label{Lp-bound-g_p-2}
\Big\|\sqrt{{\rm G}_p\big(H_\kappa(t)f\big)}\Big\|_{L^p(\mu_\kappa)}\leq\frac{c_p}{\sqrt{t}}\|f\|_{L^p(\mu_\kappa)},\quad  t>0.
\end{eqnarray}
\end{proposition}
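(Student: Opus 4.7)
My approach adapts Stein's heat-flow method to the Dunkl setting, exploiting that $(H_\kappa(t))_{t\ge 0}$ is a symmetric diffusion semigroup. Set $u(t,\cdot):=H_\kappa(t)f\ge 0$; positivity is preserved by the Dunkl heat flow, and by Lemma \ref{Gamma-comparison} one has ${\rm G}_p(u)\ge 0$, so $u^{p-2}{\rm G}_p(u)\ge 0$. Rearranging the definition of ${\rm G}_p$ gives the algebraic identity
$$p\,u^{p-1}\Delta_\kappa u \;=\; \Delta_\kappa(u^p)-p\,u^{p-2}{\rm G}_p(u).$$
Integrating against $d\mu_\kappa$, and using the self-adjointness of $\Delta_\kappa$ on $L^2(\mu_\kappa)$ (pair $u^p$ with $\mathbbm{1}$) to eliminate the first term on the right, together with the heat equation, produces the energy identity
$$\frac{d}{dt}\int_{\R^d}u(t)^p\,d\mu_\kappa \;=\; -p\int_{\R^d}u(t)^{p-2}{\rm G}_p(u(t))\,d\mu_\kappa,$$
which upon integration over $(0,\infty)$ yields the global energy bound
$$\int_0^\infty\!\!\int_{\R^d}u^{p-2}{\rm G}_p(u)\,d\mu_\kappa\,dt \;\le\; \tfrac{1}{p}\|f\|_{L^p(\mu_\kappa)}^p.$$

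For \eqref{Lp-bound-g_p-1}, I factorize pointwise ${\rm G}_p(u) = u^{2-p}\cdot u^{p-2}{\rm G}_p(u)$ (valid because $u,{\rm G}_p(u)\ge 0$ and $2-p\ge 0$) and introduce the maximal function $u^*(x):=\sup_{t>0} u(t,x)$. Stein's maximal theorem for symmetric diffusion semigroups \cite{Stein70} gives $\|u^*\|_{L^p(\mu_\kappa)}\le C_p\|f\|_{L^p(\mu_\kappa)}$ with $C_p$ depending only on $p$. Pulling the supremum out of the time integral,
$$g_p(f)^2(x)\;\le\; u^*(x)^{2-p}\int_0^\infty u(t,x)^{p-2}{\rm G}_p(u(t,x))\,dt,$$
and then applying H\"older's inequality in $d\mu_\kappa$ with conjugate exponents $\tfrac{2}{2-p}$ and $\tfrac{2}{p}$, combined with the maximal estimate and the global energy bound, gives \eqref{Lp-bound-g_p-1} once one checks that the resulting exponent of $\|f\|_{L^p(\mu_\kappa)}$ is $p\tfrac{2-p}{2}+p\tfrac{p}{2}=p$. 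For \eqref{Lp-bound-g_p-2}, the analogous H\"older factorization at fixed $t$, combined with the $L^p$-contractivity of $H_\kappa(t)$, reduces matters to bounding $\int u(t)^{p-2}{\rm G}_p(u(t))\,d\mu_\kappa$ by $Ct^{-1}\|f\|_{L^p(\mu_\kappa)}^p$. Assuming monotonicity of $s\mapsto\int u(s)^{p-2}{\rm G}_p(u(s))\,d\mu_\kappa$, one has
$$\tfrac{t}{2}\int u(t)^{p-2}{\rm G}_p(u(t))\,d\mu_\kappa \;\le\; \int_{t/2}^t\!\!\int u^{p-2}{\rm G}_p(u)\,d\mu_\kappa\,ds \;\le\; \tfrac{1}{p}\|f\|_{L^p(\mu_\kappa)}^p$$
directly from the global energy bound, which completes \eqref{Lp-bound-g_p-2}.

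The main obstacle is exactly this monotonicity, equivalently the convexity of $s\mapsto\|u(s)\|_{L^p(\mu_\kappa)}^p$. For classical local diffusion semigroups this is a textbook consequence of spectral decomposition, but here one must check that the nonlocal reflection terms in $\Delta_\kappa$ do not destroy the property. My plan is to differentiate $F(s):=\int u(s)^{p-2}{\rm G}_p(u(s))\,d\mu_\kappa$ in $s$, use the self-adjointness of $\Delta_\kappa$ on $L^2(\mu_\kappa)$ to integrate by parts, and isolate a manifestly nonpositive local contribution of the form $-(p-1)\int u^{p-2}(\Delta_\kappa u)^2\,d\mu_\kappa$, verifying via the explicit pointwise expression of Lemma \ref{Gamma-p-formula} that the nonlocal remainder is likewise $\le 0$. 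All other ingredients---the energy identity, H\"older's inequality, Stein's maximal theorem, and $L^p$-contractivity---furnish constants depending only on $p$, giving the dimension-free character of $c_p$ claimed in the proposition.
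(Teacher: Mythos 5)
Your treatment of \eqref{Lp-bound-g_p-1} is essentially the paper's argument: the same identity $p\,u^{p-2}{\rm G}_p(u)=(\Delta_\kappa-\partial_t)u^p$, the same factorization through the maximal function, and the same H\"older step with exponents $\tfrac{2}{2-p}$ and $\tfrac{2}{p}$. The one weak point there is your justification of the energy bound: you ``pair $u^p$ with $\mathbbm{1}$'' and invoke self-adjointness to kill $\int_{\R^d}\Delta_\kappa(u^p)\,\d\mu_\kappa$. This does not follow from $L^2$-self-adjointness, since $\mathbbm{1}\notin L^2(\mu_\kappa)$ and $u^p$ ($p<2$) need not sit in the domain in the required sense; the paper devotes a genuine argument to this point (the claim \eqref{claim}), proving only the inequality $\int_{\R^d}\Delta_\kappa(u_t^p)\,\d\mu_\kappa\le 0$ via radial cut-offs $\phi_n$, the integration-by-parts formula $\int\phi_n\Delta_\kappa(u_t^p)\,\d\mu_\kappa=\int u_t^p\,\Delta\phi_n\,\d\mu_\kappa$, and a generalized Fatou lemma (whose applicability itself rests on $\partial_t u_t^p\in L^1(\mu_\kappa)$, obtained from analyticity). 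The inequality is all you need, and your argument survives once this is supplied, but as written the step is not justified.

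The genuine gap is in your proof of \eqref{Lp-bound-g_p-2}. You reduce correctly to bounding $F(t):=\int_{\R^d}u(t)^{p-2}{\rm G}_p(u(t))\,\d\mu_\kappa$ by $Ct^{-1}\|f\|_{L^p(\mu_\kappa)}^p$, but you obtain this from the asserted monotonicity of $F$, equivalently the convexity of $t\mapsto\|u(t)\|_{L^p(\mu_\kappa)}^p$. For $p\neq 2$ this is not ``a textbook consequence of spectral decomposition'' even for the classical heat semigroup (spectral theory gives it only for $p=2$), and your plan to verify it --- differentiating $F$, integrating by parts, and checking that the non-local remainder is nonpositive --- is left entirely unexecuted; you yourself flag it as the main obstacle. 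This unestablished convexity is precisely what the paper avoids: it instead uses the analyticity of $(H_\kappa(t))_{t>0}$ in $L^p(\mu_\kappa)$ for $1<p<\infty$ (Stein's theorem for symmetric diffusion semigroups), which gives $\|\Delta_\kappa H_\kappa(t)f\|_{L^p(\mu_\kappa)}\le \tfrac{a_p}{t}\|f\|_{L^p(\mu_\kappa)}$ as in \eqref{analyticity}, and then bounds
$pF(t)=\int_{\R^d}(\Delta_\kappa-\partial_t)u_t^p\,\d\mu_\kappa\le -p\int_{\R^d}u_t^{p-1}\Delta_\kappa u_t\,\d\mu_\kappa\le p\|u_t\|_{L^p(\mu_\kappa)}^{p-1}\|\Delta_\kappa u_t\|_{L^p(\mu_\kappa)}\le \tfrac{p\,a_p}{t}\|f\|_{L^p(\mu_\kappa)}^p$,
using \eqref{claim} and H\"older. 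You should replace your convexity step by this analyticity argument (or actually prove the monotonicity, which would require substantial new work in the non-local Dunkl setting).
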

\begin{proof}
Let $p\in(1,2]$. Assume that $0\leq f\in C_c^\infty(\R^d)$ and $f$ is not identical to the zero function. Then $H_\kappa(t)f\in C^\infty(\R^d)\cap\mathcal{D}(\Delta_\kappa)\cap L^p(\mu_\kappa)$ and $H_\kappa(t)f>0$ for all $t>0$. 
For notational simplicity, we set
$$v_t(x)=v(t,x)=H_\kappa(t)f(x).$$
Then
\begin{eqnarray*}
pv_t^{p-2}{\rm G}_p(v_t)&=&\Delta_\kappa(v_t^p)-pv_t^{p-1}\Delta_\kappa v_t\\
&=&pv_t^{p-1}(\partial_t -\Delta_\kappa) v_t - pv_t^{p-1}\partial_tv_t + \Delta_\kappa(v_t^p)\\
&=& (\Delta_\kappa-\partial_t)v_t^p=: L_t,
\end{eqnarray*}
where we used the fact that $(\partial_t -\Delta_\kappa) v_t =0$ in the last equality. Then
\begin{eqnarray}\label{Lp-bound-g_p-proof-1}
{\rm G}_p(v_t)=\frac{1}{p}v_t^{2-p}L_t.
\end{eqnarray}

(1) We prove \eqref{Lp-bound-g_p-1} first.
\begin{eqnarray*}
g_p(f)^2(x)&=&\int_0^\infty {\rm G}_p(v_t)(x)\,\d t\\
&=&\frac{1}{p}\int_0^\infty v^{2-p}(t,x)(\Delta_\kappa-\partial_t)v^p(t,x)\,\d t\\
&\leq&\frac{1}{p}\big(\sup_{t>0}v^{2-p}(t,x)\big)L(x),
\end{eqnarray*}
where we have let $L(x)=\int_0^\infty (\Delta_\kappa-\partial_t)v^p(t,x)\,\d t$, and we see that $L(x)\geq0$ since ${\rm G}_p(v_t)(x)\geq0$ by Lemma \ref{Gamma-comparison}. Thus
\begin{eqnarray}\label{proof-prop-1}
\int_{\R^d}g_p(f)^p\,\d\mu_\kappa&\leq& \frac{1}{p}\int_{\R^d} \big(\sup_{t>0}v(t,x)\big)^{(2-p)p/2}L(x)^{p/2}\,\d\mu_\kappa(x)\cr
&\leq&\frac{1}{p}\Big(\int_{\R^d}\big(\sup_{t>0}v(t,x)\big)^p\,\d\mu_\kappa(x)\Big)^{(2-p)/2}   \Big(\int_{\R^d}L(x)\,\d\mu_\kappa(x)\Big)^{p/2}\cr
&\leq& C_p\|f\|_{L^p(\mu_\kappa)}^{(2-p)p/2}\Big(\int_{\R^d}L(x)\,\d\mu_\kappa(x)\Big)^{p/2},
\end{eqnarray}
where we used the fact that $\|\sup_{t>0}v(t,x)\|_{L^p(\mu_\kappa)}\leq C_p\|f\|_{L^p(\mu_\kappa)}$ for some positive constant $C_p$ depending only on $p$ (see e.g. \cite[page 73]{Stein70}).

For every $t>0$, we claim that
\begin{eqnarray}\label{claim}
\int_{\R^d} \Delta_\kappa v_t^p\,\d\mu_\kappa\leq 0.
\end{eqnarray}
The proof of \eqref{claim} is motivated by \cite{LiWang16} in the case of pure jump Dirichlet forms. We may fix $t>0$ for the moment. By the analyticity of $H_\kappa(t)$, we have
\begin{eqnarray}\label{analyticity}
\|\Delta_\kappa v_t\|_{L^p(\mu_\kappa)}=\|\Delta_\kappa H_\kappa(t)f\|_{L^p(\mu_\kappa)}\leq \frac{a_p}{t}\|f\|_{L^p(\mu_\kappa)}.
\end{eqnarray}
for some positive constant $a_p$ depending only on $p$. Then, by H\"{o}lder's inequality and \eqref{analyticity},
\begin{eqnarray*}
\int_{\R^d}|\partial_t v_t^p|\,\d\mu_\kappa&=&p\int_{\R^d}|v_t|^{p-1}|\Delta_\kappa v_t|\,\d\mu_\kappa\\
&\leq&p\|v_t\|_{L^p(\mu_\kappa)}^{p-1}\|\Delta_\kappa v_t\|_{L^p(\mu_\kappa)}\leq \frac{pa_p}{t}\|f\|_{L^p(\mu_\kappa)}^p,
\end{eqnarray*}
which implies that $\partial_t v_t^p\in L^1(\mu_\kappa)$. By \eqref{Lp-bound-g_p-proof-1}, we get $L_t\geq0$.  Since
$$\Delta_\kappa v_t^p=L_t+ \partial_t v_t^p,$$
we have
$$\int_{\R^d}(\Delta_\kappa v_t^p)^-\,\d\mu_\kappa<\infty.$$
Let $\psi:\R\rightarrow[0,1]$ be an infinitely differentiable function such that $\supp(\psi)\subset(-\infty, 3)$,  $\psi(t)=1$ for every $t\leq1$,  and $|\psi'|+|\psi''|\leq C$ for some positive constant $C$. Given a sequence $(r_n)_{n\geq1}$ such that $r_n\rightarrow\infty$ as $n$ tends to $\infty$, define the sequence of cut-off functions $(\phi_n)_{n\geq1}$ by
$$\phi_n(x):=\psi\Big(\frac{|x|}{r_n}\Big),\quad x\in\R^d,\,n\geq1.$$
It is easy to see that $|\Delta \phi_n|\leq C_d/r_n^2$ for some positive constant $C_d$ depending only on $d$ and $C$. Note that $\phi_n$ is radial for each $n$. Then, by the generalized Fatou's Lemma (see \cite[Theorem 3.3.6 (2)]{YJA}) and the integration-by-parts formula (see e.g. \cite[Proposition 2.1]{Rosler2003}), we deduce that
\begin{eqnarray*}
\int_{\R^d}\Delta_\kappa v_t^p\,\d\mu_\kappa&\leq& \liminf_{n\rightarrow\infty}\int_{\R^d}\phi_n\Delta_\kappa v_t^p\,\d\mu_\kappa\\
&=&\liminf_{n\rightarrow\infty}\int_{\R^d}  v_t^p\Delta\phi_n\,\d\mu_\kappa\\
&\leq&\liminf_{n\rightarrow\infty}\frac{C_d}{r_n^2}\int_{\R^d} v_t^p \,\d\mu_\kappa=0.
\end{eqnarray*}
The claim is proved.

By \eqref{claim}, we have
\begin{eqnarray}\label{proof-prop-2}
\int_{\R^d}L(x)\,\d\mu_\kappa(x)&=&\int_0^\infty\int_{\R^d} \Delta_\kappa v_t^p(x)\,\d\mu_\kappa(x)\d t\cr
&&-   \int_{\R^d}\int_0^\infty \partial_t v_t^p(x)\,\d t\d\mu_\kappa(x)\cr
&\leq&\int_{\R^d}f^p(x)\,\d\mu_\kappa(x)=\|f\|_{L^p(\mu_\kappa)}^p.
\end{eqnarray}

Combining \eqref{proof-prop-1} and \eqref{proof-prop-2}, we obtain
$$\int_{\R^d}g_p(f)^p\,\d\mu_\kappa\leq C_p\|f\|_{L^p(\mu_\kappa)}^{(2-p)p/2}\|f\|_{L^p(\mu_\kappa)}^{p^2/2}=C_p\|f\|_{L^p(\mu_\kappa)}^p.$$
Thus, we complete the proof of \eqref{Lp-bound-g_p-1}.

(2) Now we prove \eqref{Lp-bound-g_p-2}. The argument is similar as above. By \eqref{claim} and H\"{o}lder's inequality,
\begin{eqnarray*}
\int_{\R^d}L_t\,\d\mu_\kappa&\leq& -\int_{\R^d}v_t^{p-1}\partial_t v_t\,\d\mu_\kappa\\
&\leq& \|v_t\|_{L^p(\mu_\kappa)}^{p-1}\|\partial_t v_t\|_{L^p(\mu_\kappa)}\\
&\leq&  \|f\|_{L^p(\mu_\kappa)}^{p-1} \frac{a_p}{t} \|f\|_{L^p(\mu_\kappa)}\\
&=&  \frac{a_p}{t} \|f\|_{L^p(\mu_\kappa)}^p,
\end{eqnarray*}
where we used the analyticity of $H_\kappa(t)$ (see \eqref{analyticity} above) in the last inequality. Hence, combining this together with \eqref{Lp-bound-g_p-proof-1} and applying H\"{o}lder's inequality, we obtain that
\begin{eqnarray*}
\int_{\R^d}{\rm G}_p(v_t)^{p/2}\,\d\mu_\kappa&=&\frac{1}{p}\int_{\R^d}v_t^{p(2-p)/2} L_t^{p/2}\,\d\mu_\kappa\\
&\leq&\frac{1}{p} \Big(\int_{\R^d}v_t^p\,\d\mu_\kappa\Big)^{(2-p)/2}   \Big(\int_{\R^d}L_t\,\d\mu_\kappa\Big)^{p/2}\\
&\leq& \frac{1}{p}\Big(\frac{a_p}{t}\Big)^{p/2}\|f\|_{L^p(\mu_\kappa)}^p.
\end{eqnarray*}
Thus, we complete the proof of \eqref{Lp-bound-g_p-2}.
\end{proof}

The main result in this section is presented in the next theorem. Recall that $|\mathfrak{R}_+|$ is the order of $\mathfrak{R}_+$.
\begin{theorem}\label{Gamma-LPS-small-p}
Let $p\in(1,2]$. Then the operator $g_\Gamma$ is bounded in $L^p(\mu_\kappa)$; more precisely, there exists a positive constant $C_p$, depending only on $p$ and $|\mathfrak{R}_+|$, such that for all $f\in L^p(\mu_\kappa)$,
\begin{eqnarray}\label{Gamma-LPS-small-p-1}
\|g_\Gamma(f)\|_{L^p(\mu_\kappa)}\leq C_p\|f\|_{L^p(\mu_\kappa)},
\end{eqnarray}
and, moreover,
\begin{eqnarray}\label{Gamma-LPS-small-p-2}
\Big\|\sqrt{\Gamma\big(H_\kappa(t)f\big)}\Big\|_{L^p(\mu_\kappa)}\leq\frac{C_p}{\sqrt{t}}\|f\|_{L^p(\mu_\kappa)},\quad t>0.
\end{eqnarray}
\end{theorem}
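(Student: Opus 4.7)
The plan is to bootstrap from the $L^p$ bound on the pseudo-gradient $g_p$ (Proposition~\ref{Lp-bound-g_p}) using the pointwise comparison between $\Gamma$ and ${\rm G}_p$ proved in Lemma~\ref{Gamma-comparison}. Both ingredients apply only to nonnegative functions, which dictates the three-stage structure I would follow: establish the theorem first for $0\le f\in C_c^\infty(\R^d)$, then for arbitrary $f\in C_c^\infty(\R^d)$ via the sublinearity of $g_\Gamma$, and finally for $f\in L^p(\mu_\kappa)$ by density.

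For the first stage, I would fix a nonnegative $f\in C_c^\infty(\R^d)$. Since the Dunkl heat kernel is positive, $H_\kappa(t)f\ge 0$ for every $t>0$, so Lemma~\ref{Gamma-comparison} applies to $H_\kappa(t)f$; integrating \eqref{Gamma-comparison-2} in $t$ yields the pointwise inequality
$$g_\Gamma(f)^2(x)\le \frac{1}{p-1}\Big[\sum_{\alpha\in\mathfrak{R}_+} g_p(f)^2(r_\alpha x)+g_p(f)^2(x)\Big].$$
Taking square roots (via the subadditivity $(\sum_i a_i)^{1/2}\le \sum_i a_i^{1/2}$) and then the $L^p(\mu_\kappa)$ norm, and using the $G$-invariance of $\mu_\kappa$ (so that $\|g_p(f)\circ r_\alpha\|_{L^p(\mu_\kappa)}=\|g_p(f)\|_{L^p(\mu_\kappa)}$), I obtain
$$\|g_\Gamma(f)\|_{L^p(\mu_\kappa)}\le \frac{|\mathfrak{R}_+|+1}{\sqrt{p-1}}\,\|g_p(f)\|_{L^p(\mu_\kappa)}.$$
An application of \eqref{Lp-bound-g_p-1} then produces \eqref{Gamma-LPS-small-p-1} for nonnegative $f\in C_c^\infty$ with constant depending only on $p$ and $|\mathfrak{R}_+|$. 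Exactly the same recipe, but without integrating in $t$ and with \eqref{Lp-bound-g_p-2} replacing \eqref{Lp-bound-g_p-1}, delivers \eqref{Gamma-LPS-small-p-2} in the nonnegative case.

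For arbitrary real-valued $f\in C_c^\infty(\R^d)$, I would exploit the sublinearity
$$g_\Gamma(f)(x)\le g_\Gamma(f^+)(x)+g_\Gamma(f^-)(x),$$
which follows from the linearity of $H_\kappa(t)$, the triangle inequality for the seminorm $\sqrt{\Gamma(\cdot)}$ (available because $\Gamma$ is a nonnegative bilinear form; see \eqref{Gamma}), and Minkowski's inequality in $L^2((0,\infty),\d t)$. After approximating $f^{\pm}$ by nonnegative $C_c^\infty$ functions in $L^p(\mu_\kappa)$ (truncation and mollification), the previous step applies to each piece, and a standard density argument extends both \eqref{Gamma-LPS-small-p-1} and \eqref{Gamma-LPS-small-p-2} to all $f\in L^p(\mu_\kappa)$, at the cost of only a universal factor of~$2$ in the constants.

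The only non-routine point is the handling of the sign of $f$: Lemma~\ref{Gamma-comparison} and Proposition~\ref{Lp-bound-g_p} rely on the definition of ${\rm G}_p(f)$, in which the factor $f^{2-p}$ forces $f\ge 0$. This is what necessitates the splitting $f=f^+-f^-$, and its cost is controlled by the seminorm property of $\sqrt{\Gamma}$, so the dimension-free character of the final constants is preserved. Everything else is a careful bookkeeping of the constants arising from the $|\mathfrak{R}_+|+1$ reflected copies of $g_p(f)$, which is precisely the source of the $|\mathfrak{R}_+|$-dependence in the conclusion.
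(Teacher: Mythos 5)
Your proposal is correct and follows essentially the same route as the paper: reduce to nonnegative $f\in C_c^\infty(\R^d)$ via the splitting $f=f^+-f^-$ (the paper does this with the pointwise bound $\Gamma(H_\kappa(t)f)\le 2[\Gamma(H_\kappa(t)f^+)+\Gamma(H_\kappa(t)f^-)]$, equivalent to your seminorm/Minkowski argument), apply \eqref{Gamma-comparison-2} to $H_\kappa(t)f\ge 0$, use the $G$-invariance of $\mu_\kappa$ to absorb the $|\mathfrak{R}_+|$ reflected copies, and conclude with Proposition~\ref{Lp-bound-g_p}. The only cosmetic difference is that the paper sums the $p/2$-th powers via $(a+b)^{p/2}\le a^{p/2}+b^{p/2}$ where you take square roots and use the triangle inequality in $L^p(\mu_\kappa)$; both yield a constant depending only on $p$ and $|\mathfrak{R}_+|$.
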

\begin{proof}
By standard approximation, it suffices to prove the case when $f\in C_c^\infty(\R^d)$. Assume $f\in C_c^\infty(\R^d)$. It is easy to see that
\begin{eqnarray*}\Gamma(H_\kappa(t)f)(x)&=&\Gamma\big(H_\kappa(t)(f^+-f^-)\big)(x)\\
&\leq& 2\big[\Gamma(H_\kappa(t)f^+)(x)+\Gamma(H_\kappa(t)f^-)(x)\big].
\end{eqnarray*}
Then it is enough to assume $f\geq0$ in addition. By \eqref{Gamma-comparison-2} in Lemma \ref{Gamma-comparison}, we have, for every $\alpha\in\mathfrak{R}_+$,
\begin{eqnarray*}
g_\Gamma(f)^2(x)&=&\int_0^\infty \Gamma(H_\kappa(t) f)(x)\,\d t \\
&\leq&  c_p \int_0^\infty \Big[\sum_{\alpha\in\mathfrak{R}_+}{\rm G}_p(H_\kappa(t) f)(r_\alpha x) +  {\rm G}_p(H_\kappa(t) f)(x)\Big]\,\d t\\
&=&c_p\Big[\sum_{\alpha\in\mathfrak{R}_+}g_p(f)^2(r_\alpha x) + g_p(f)^2(x)\Big],
\end{eqnarray*}
where $c_p=1/(p-1)$ is from Lemma \ref{Gamma-comparison}. Applying \eqref{Lp-bound-g_p-1} in Proposition \ref{Lp-bound-g_p}, we deduce that
\begin{eqnarray*}
\int_{\R^d}g_\Gamma(f)^p\,\d\mu_\kappa&\leq&\tilde{c}_p\int_{\R^d}\Big[g_p(f)^p(x)+\sum_{\alpha\in\mathfrak{R}_+}g_p(f)^p(r_\alpha x)\Big]\,\d\mu_\kappa(x)\\
&=&\tilde{c}_p(1+|\mathfrak{R}_+|)\int_{\R^d}g_p(f)^p(x)\,\d\mu_\kappa(x)\\
&\leq& C_p\int_{\R^d}f^p\,\d\mu_\kappa,
\end{eqnarray*}
for some positive constant $C_p$ depending only on $p$ and $|\mathfrak{R}_+|$, where the equality is due to that $r_\alpha$ is a reflection and $\mu_\kappa$ is $G$-invariant. We complete the proof of \eqref{Gamma-LPS-small-p-1}.

Similarly, by \eqref{Gamma-comparison-2} again and \eqref{Lp-bound-g_p-2}, we complete the proof of \eqref{Gamma-LPS-small-p-2}.
\end{proof}

\section{$L^p$ boundedness for $p\in[2,\infty)$}\hskip\parindent
In this section, we prove the $L^p$ boundedness for the operator $g_\Gamma$ for all $p\in[2,\infty)$ in the particular case when the Weyl group $G$ is isomorphic to $\mathbb{Z}_2^d=\{0,1\}^d$. We employ the probabilistic approach which was initially introduced in \cite{BMH2003} for Brownian motions and was recently adapted successfully  to deal with diffusion processes on $\RCD(K,N)$ spaces (see \cite{Li2017} for the case when $K=0$ and $1\leq N<\infty$ and \cite{Li2019} for the case when $K\in\R$ and $N=\infty$, as well as for more details on $\RCD$ spaces).

The natural stochastic process generated by the Dunkl Laplacian is the so-called Dunkl process, which was studied earlier in \cite{Rosler1998,RosVoi1998,GaYor2005,GaYor2006} for instance. Let $X:=(X_t)_{t\geq0}$ be the Dunkl process with infinitesimal generator $\big(\Delta_\kappa,\mathcal{D}(\Delta_\kappa)\big)$ in $\R^d$. For each $\alpha\in\mathfrak{R}_+$, recall that $H_\alpha$ is the hyperplane orthogonal to $\alpha$. For every subset $I$ of $\mathfrak{R}_+$, let
$$U_I :=\{\alpha\in\mathfrak{R}_+: \langle\alpha,x\rangle=0,\,x\in \cap_{\alpha\in I}H_\alpha\}.$$
It is known that $X$ is a c\`{a}dl\`{a}g Markov process of jump type with jumping kernel (see \cite[Proposition 3.1]{GaYor2006})
\begin{eqnarray*}
J(x,dy)=\begin{cases}
\sum_{\alpha\in\mathfrak{R}_+}\frac{2\kappa_\alpha}{\langle\alpha,x\rangle^2}\delta_{r_\alpha x}(\d y),\quad&{x\in\R^d\setminus(\cup_{\alpha\in\mathfrak{R}_+}H_\alpha),}\\
\sum_{\alpha\in\mathfrak{R}_+\setminus U_I}\frac{2\kappa_\alpha}{\langle\alpha,x\rangle^2}\delta_{r_\alpha x}(\d y),\quad&{x\in \cap_{\alpha\in I}H_\alpha,}\\
0,\quad&{x=0,}
\end{cases}
\end{eqnarray*}
where $I$ is any subset of $\mathfrak{R}_+$, $\delta_z$ denotes the Dirac measure at the point $z\in\R^d$. Due to our purpose, we may assume that the process $X$ does not start from $0$ in what follows.

We should mention that although the Dunkl process $X$ is a jump process, the approaches developed mainly for pure jump L\'{e}vy processes in recent papers \cite{BBL2016,LiWang16} (see also \cite{LiWang2019} for the non-local Schr\"{o}dinger case) seem not applicable directly. However, the Dunkl heat flow in the special situation when the Weyl group $G$ is isomorphic to $\mathbb{Z}_2^d$ seems more well-behaved as the diffusion one. Due to this, we may apply the method used in \cite{Li2019}.

There are essentially no new ideas in the following arguments. The novelty here maybe is that we can calculate more explicitly in the present Dunkl setting than \cite[Section 3]{LiWang16} in the general setting of pure jump Dirichlet forms.

Now fix $f\in C^\infty_c(\R^d)$ and $T>0$. Let
$$\mathcal{N}_t:= H_\kappa(T-t)f(X_t)-H_\kappa(T)f(X_0),\quad t\in[0,T].$$
Let $(B_t)_{t\geq0}$ be the Brownian motion in $\R^d$ with infinitesimal generator $\Delta$, and denote by $(\mathcal{F}_t)_{t\geq0}$ the natural filtration of the process $X$.
\begin{lemma}\label{mart}
 $(\mathcal{N}_t,\mathcal{F}_t)_{t\in[0,T]}$ is a martingale starting from 0, and for any $t\in[0,T]$,
\begin{eqnarray}\label{quad-var-N}
\langle\mathcal{N}\rangle_t&=&2\int_0^t|\nabla H_\kappa(T-s)f|^2(X_s)\,\d s +\cr &&2\sum_{\alpha\in\mathfrak{R}_+}\int_0^t\kappa_\alpha\,\frac{\big(H_\kappa(T-s)f(X_{s-}) - H_\kappa(T-s)f(r_\alpha X_{s-})\big)^2}{\langle\alpha, X_{s-}\rangle^2}\,\d s,
\end{eqnarray}
where $\langle\mathcal{N}\rangle_t$ is the predictable quadratic variation of $\mathcal{N}_t$ and $X_{t-}:=\lim_{s<t,\, s\rightarrow t}X_s$.
\end{lemma}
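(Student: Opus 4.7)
The plan is to apply Itô's formula, in its Markov-process (Dynkin) form, to the space-time function $u(t,x):=H_\kappa(T-t)f(x)$ evaluated along the Dunkl process $X$. Differentiating $H_\kappa(T-t)f$ in $t$ yields the backward Dunkl heat equation
\[\partial_t u(t,x)+\Delta_\kappa u(t,x)=0,\quad (t,x)\in[0,T)\times\R^d,\]
while the assumption $f\in C_c^\infty(\R^d)$, combined with the smoothness and the stochastic completeness \eqref{stoch-complete} of the Dunkl heat kernel, guarantees that $u$ and its spatial derivatives are bounded on $[0,T]\times\R^d$, and that $u(t,\cdot)$ belongs to the domain of the Dunkl carré du champ for every $t\in[0,T]$.

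Since $X$ is the Markov process with generator $(\Delta_\kappa,\mathcal{D}(\Delta_\kappa))$, Dynkin's formula applied to this time-dependent $u$ shows that
\[u(t,X_t)-u(0,X_0)-\int_0^t(\partial_s u+\Delta_\kappa u)(s,X_s)\,\d s\]
is a local $(\mathcal{F}_t)$-martingale; the backward heat equation makes the drift integrand vanish, so $\mathcal{N}_t$ itself is a local martingale starting from zero, and the uniform boundedness of $u$ on $[0,T]\times\R^d$ upgrades it to a true martingale.

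For the predictable quadratic variation I would invoke the general identity $\d\langle\mathcal{N}\rangle_t=2\,\Gamma(u(t,\cdot))(X_t)\,\d t$, valid for martingales produced by Dynkin's formula on a Markov process with carré du champ $\Gamma$; substituting the explicit formula \eqref{Gamma} then produces exactly \eqref{quad-var-N}. A more transparent route, which I would probably favor, is to decompose $\mathcal{N}$ into its continuous and purely discontinuous parts: the continuous martingale stems from the diffusion piece $\Delta+2\sum_\alpha\kappa_\alpha\langle\alpha,\nabla\cdot\rangle/\langle\alpha,\cdot\rangle$ of $\Delta_\kappa$ and contributes the continuous quadratic variation $2\int_0^t|\nabla u|^2(s,X_s)\,\d s$, while the jumps of $X$ are compensated by the kernel $J(x,\d y)$ recalled above, so squaring the jump $u(s,r_\alpha X_{s-})-u(s,X_{s-})$ and integrating against the compensator $\sum_\alpha 2\kappa_\alpha\langle\alpha,X_{s-}\rangle^{-2}\delta_{r_\alpha X_{s-}}(\d y)\,\d s$ yields the jump contribution in \eqref{quad-var-N}. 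The main obstacle I foresee is the careful handling of the singularities of $\Delta_\kappa$ along the hyperplanes $\bigcup_\alpha H_\alpha$: to legitimize both Dynkin's formula and the jump-compensation calculation, I would appeal to the standing hypothesis $X_0\neq 0$ together with polarity of the origin for $X$ established in the Dunkl-process literature, which ensures that the singular coefficients cause no integrability problems on $[0,T]$.
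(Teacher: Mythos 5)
Your argument is correct but takes a genuinely different route from the paper's, which disposes of the lemma in two lines by quoting Gallardo--Yor: their It\^{o} formula for the Dunkl process (\cite[Corollary 3.6]{GaYor2006}) already writes $\mathcal{N}_t$ explicitly as $\int_0^t\langle\nabla H_\kappa(T-s)f(X_s),\d B_s\rangle$ plus the stochastic integrals of $\sqrt{\kappa_\alpha}\,[H_\kappa(T-s)f(X_{s-})-H_\kappa(T-s)f(r_\alpha X_{s-})]/\langle\alpha,X_{s-}\rangle$ against the purely discontinuous martingales $M^\alpha$, and \cite[Theorem 1]{GaYor2006} gives $\langle M^\alpha\rangle_t=2t$, from which \eqref{quad-var-N} is immediate. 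You instead get the martingale property from Dynkin's formula combined with the backward heat equation, and the bracket from the general identity $\d\langle\mathcal{N}\rangle_t=2\,\Gamma\big(H_\kappa(T-t)f\big)(X_t)\,\d t$ (or, equivalently, from the L\'{e}vy-system computation with the jump kernel $J$); inserting \eqref{Gamma} then reproduces \eqref{quad-var-N}, the discrepancy between $X_s$ and $X_{s-}$ being harmless in a $\d s$-integral since the jump times are countable. The paper's route buys that all delicate points --- the singular coefficients on $\bigcup_\alpha H_\alpha$ and the very existence and structure of the jump martingales $M^\alpha$ --- are already settled in the cited results; your route buys conceptual self-containedness and immediate generality to any Markov process with a carr\'{e} du champ. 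One caveat: the technical obstacle you flag is not really polarity of the origin but the behaviour of $X$ on the hyperplanes $H_\alpha$, where $\langle\alpha,x\rangle^{-1}$ and $\langle\alpha,x\rangle^{-2}$ blow up; to justify Dynkin's formula and the compensator computation there you would in practice fall back on the same Gallardo--Yor analysis (or on the fact that the occupation time of $\bigcup_\alpha H_\alpha$ is Lebesgue-null), so at that point your argument is no more elementary than the paper's.
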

\begin{proof}
By It\^{o}'s formula (see e.g. \cite[Corollary 3.6]{GaYor2006}), we have
\begin{eqnarray*}
\mathcal{N}_t&=&\int_0^t\langle\nabla H_\kappa(T-s)f(X_s),\d B_s\rangle +\\ &&\sum_{\alpha\in\mathfrak{R}_+}\int_0^t\sqrt{\kappa_\alpha}\,\frac{H_\kappa(T-s)f(X_{s-}) - H_\kappa(T-s)f(r_\alpha X_{s-})}{\langle\alpha, X_{s-}\rangle}\,\d M_s^\alpha,
\end{eqnarray*}
where $(M_t^\alpha)_{t\geq0}$ is an one-dimensional martingale with discontinuous paths. Hence $(\mathcal{N}_t)_{t\in[0,T]}$ is a martingale. From \cite[Theorem 1]{GaYor2006}, we have $\langle M^\alpha\rangle_t=2t$. Thus, we immediately get \eqref{quad-var-N}.
\end{proof}

For $f,g\in C^4(\R^d)$, we define as in the classical Laplacian case (see e.g. \cite{BE1985}) that
\begin{eqnarray}\label{Gamma2}
	\Gamma_2(f,g):=\frac{1}{2}\big[\Delta_\kappa \Gamma(f,g)-\Gamma(\Delta_\kappa f,g)-\Gamma(f,\Delta_\kappa g)\big].
\end{eqnarray}
For convenience, we set $\Gamma_2(f)=\Gamma_2(f,f)$.

The following result is the key to apply the approach in \cite{Li2019} mentioned above, and it may be of independent interest. Since the proof is a little bit long, we present the details in the next section.
\begin{proposition}\label{cur-dim}
Let $G$ be isomorphic to $\mathbb{Z}_2^d$. Then, for every $f\in C^4(\R^d)$,
$$\Gamma_2(f)\geq \|\Hess(f)\|_{\rm HS}^2.$$
where $\Hess(f)$ is the Hessian of $f$ and $\|\cdot\|_{\rm HS}$ is the Hilbert--Schmidt norm.
\end{proposition}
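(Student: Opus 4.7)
My plan is to exploit the product structure afforded by the isomorphism $G \cong \mathbb{Z}_2^d$. In this setting one takes $\mathfrak{R}_+ = \{\sqrt{2}\, e_i : 1 \leq i \leq d\}$, with $r_i$ flipping the sign of the $i$-th coordinate; writing $\kappa_i$ for the multiplicity on $\sqrt{2}\, e_i$ and $\delta_i f(x) := (f(x) - f(r_i x))/x_i$, the Dunkl Laplacian splits into a sum of pairwise commuting one-variable operators
$$\Delta_\kappa = \sum_{i=1}^d L_i, \qquad L_i g := \partial_i^2 g + \frac{2\kappa_i}{x_i}\partial_i g - \frac{\kappa_i}{x_i^2}\bigl(g - g \circ r_i\bigr),$$
and correspondingly $\Gamma = \sum_{i=1}^d \Gamma^{(i)}$ with $\Gamma^{(i)}(f,g) = \partial_i f\, \partial_i g + \tfrac{\kappa_i}{2}\, \delta_i f\, \delta_i g$. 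Plugging this into the definition \eqref{Gamma2} splits $\Gamma_2(f) = \sum_{i,j=1}^d B_{i,j}(f)$, where $B_{i,j}(f) := \tfrac{1}{2} L_j \Gamma^{(i)}(f) - \Gamma^{(i)}(f, L_j f)$.

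For $i \neq j$ the operators $L_j$, $\partial_i$, and $\delta_i$ all commute since they act on disjoint variables. Combining this with the algebraic identity $L_j(F^2) - 2 F L_j F = 2 \Gamma^{(j)}(F) = 2(\partial_j F)^2 + \kappa_j (\delta_j F)^2$, applied successively to $F = \partial_i f$ and $F = \delta_i f$, a direct expansion should collapse to the manifestly nonnegative sum of squares
$$B_{i,j}(f) = (\partial_i \partial_j f)^2 + \frac{\kappa_j}{2}(\partial_i \delta_j f)^2 + \frac{\kappa_i}{2}(\partial_j \delta_i f)^2 + \frac{\kappa_i \kappa_j}{4}(\delta_i \delta_j f)^2 \geq (\partial_i \partial_j f)^2.$$

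The diagonal contributions $B_{i,i}$ involve only derivatives and reflections in the single variable $x_i$, so they reduce to the following one-dimensional claim: for the rank-one Dunkl Laplacian of Example \ref{rank-one} with $\kappa \geq 0$, $\Gamma_2(f)(x) \geq f''(x)^2$ at every $x \neq 0$. To establish this, the plan is to expand $\tfrac{1}{2} L \Gamma(f)$ and $\Gamma(f, Lf)$ at the pair of points $x$ and $-x$, using the identity $(\delta f) \circ r = \delta f$ to cancel numerous terms, and to verify that the residual assembles into
$$\Gamma_2(f)(x) - f''(x)^2 = \frac{\kappa}{2 x^4}\, Q(u,v,w), \qquad u := x f'(x),\ v := x f'(-x),\ w := f(x) - f(-x),$$
with $Q(u,v,w) = 6 u^2 + 4 u v + 2 v^2 - 8 u w - 4 v w + 3 w^2$; completing the square as $Q = \tfrac{2}{3}(3 u + v - 2 w)^2 + \tfrac{1}{3}(2 v - w)^2 \geq 0$ then closes the one-dimensional step. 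The apparent singularities of order $x^{-2}, x^{-3}, x^{-4}$ will cancel upon Taylor expansion at the origin, so the inequality extends to $x = 0$ by continuity.

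Summing the off-diagonal and diagonal bounds will then yield $\Gamma_2(f) \geq \sum_i (\partial_i^2 f)^2 + \sum_{i \neq j} (\partial_i \partial_j f)^2 = \|\Hess(f)\|_{\rm HS}^2$. The decisive difficulty is the one-dimensional bookkeeping: because the Dunkl Laplacian is not a derivation, the usual Bakry--\'Emery chain rule fails and the commutators $[L,\partial]$ and $[L,\delta]$ must be tracked explicitly through every cross term before the algebraic miracle of the rank-two quadratic form $Q$ becomes visible; once that form is isolated, the sum-of-squares decomposition is essentially automatic.
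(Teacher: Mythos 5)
Your proposal is correct, and it reaches the same final identities as the paper by a genuinely different organization of the computation. The paper keeps the root system general, expands $\Gamma_2(f)$ into a Hessian term plus a single-root sum $A$ and a double-root sum $B$, and only at the end invokes the orthogonality $\langle\alpha,\beta\rangle=0$ to show that $B$ collapses to the nonnegative sum $\sum_{\alpha\neq\beta}\frac{\kappa_\alpha\kappa_\beta}{\langle\alpha,x\rangle^2\langle\beta,x\rangle^2}\big[f(x)-f(r_\alpha x)-f(r_\beta x)+f(r_\alpha r_\beta x)\big]^2$. You instead tensorize from the outset: after an orthogonal change of coordinates (legitimate, since commuting reflections have orthogonal roots, so $\mathfrak{R}_+$ is an orthogonal basis of norm-$\sqrt{2}$ vectors), $\Delta_\kappa=\sum_i L_i$ and $\Gamma=\sum_i\Gamma^{(i)}$ with all cross-variable operators commuting, so each off-diagonal block $B_{i,j}$ reduces to two applications of the rank-one carr\'{e} du champ identity and yields $(\partial_i\partial_j f)^2+\frac{\kappa_j}{2}(\partial_i\delta_j f)^2+\frac{\kappa_i}{2}(\partial_j\delta_i f)^2+\frac{\kappa_i\kappa_j}{4}(\delta_i\delta_j f)^2$; the last term is exactly the paper's $B_2$ summand, and the middle two correspond to the paper's $A$ term. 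What remains is the rank-one inequality $\Gamma_2(f)\geq (f'')^2$, and your quadratic form $Q(u,v,w)=6u^2+4uv+2v^2-8uw-4vw+3w^2$ agrees exactly with the paper's Corollary \ref{R-Gamma2}, whose formula reads $\frac{\kappa}{x^4}(u+v-w)^2+\frac{\kappa}{2x^4}(2u-w)^2=\frac{\kappa}{2x^4}Q(u,v,w)$ in your variables; your decomposition $Q=\frac{2}{3}(3u+v-2w)^2+\frac{1}{3}(2v-w)^2$ is an equivalent sum of squares, so the one-dimensional step you defer is precisely the computation the paper carries out, and your stated answer checks against it. The tensorization buys a much lighter multivariate bookkeeping (only rank-one computations are ever needed, and nonnegativity of each block is manifest), at the price of fixing coordinates and being tied to the $\mathbb{Z}_2^d$ case from the start; the paper's longer global computation makes visible exactly where the hypothesis \eqref{ortho} enters and why the sign of $B$ is unclear for general Weyl groups.
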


\begin{remark}
It should be regarded as that, in the particular setting of Proposition \ref{cur-dim}, the Dunkl Laplacian $\Delta_\kappa$ satisfies the curvature-dimension condition CD$(0,\infty)$ in the sense of Bakry--Emery \cite{BE1985}.
\end{remark}

Applying Proposition \ref{cur-dim}, we immediately derive \eqref{BL-grad} below, which may be regarded as the gradient estimate for Dunkl heat flows  in the sense of Bakry--Ledoux (see e.g. \cite{BL2006} for the diffusion heat flow case). The proof follows from the standard heat flow interpolation approach.
\begin{corollary}\label{grad-est}
 The assertion that
\begin{eqnarray}\label{BL-grad}
\Gamma\big(H_\kappa(t)f\big)\leq H_\kappa(t)\Gamma(f),\quad f\in C^{\infty}_c(\R^d),\,t>0,
\end{eqnarray}
is equivalent to
$$\Gamma_2(f)\geq0,\quad f\in C^\infty(\R^d).$$
\end{corollary}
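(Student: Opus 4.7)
The plan is to apply the classical Bakry--Emery heat-flow interpolation argument, adapted to the Dunkl setting.

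For the implication $\Gamma_2 \geq 0 \Rightarrow$ \eqref{BL-grad}, I fix $f \in C_c^\infty(\R^d)$ and $t>0$ and introduce the interpolation
\[
\Phi(s) := H_\kappa(s)\,\Gamma\bigl(H_\kappa(t-s)f\bigr),\qquad s \in [0,t].
\]
Writing $u_{t-s} := H_\kappa(t-s)f$, differentiating in $s$ and using $\partial_s H_\kappa(s) = \Delta_\kappa H_\kappa(s)$ together with the bilinearity of $\Gamma$ and the definition \eqref{Gamma2} of $\Gamma_2$, one gets
\[
\Phi'(s) = H_\kappa(s)\bigl[\Delta_\kappa \Gamma(u_{t-s}) - 2\Gamma(u_{t-s},\Delta_\kappa u_{t-s})\bigr] = 2\,H_\kappa(s)\,\Gamma_2(u_{t-s}).
\]
By the assumed curvature condition $\Gamma_2 \geq 0$ and positivity of $H_\kappa(s)$, we have $\Phi'(s) \geq 0$ on $[0,t]$, so $\Phi(0) \leq \Phi(t)$, which is exactly \eqref{BL-grad}.

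For the converse, I assume \eqref{BL-grad} and perform a Taylor expansion of both sides at $t=0^+$ for $f \in C^\infty(\R^d)$. Using $H_\kappa(t)f = f + t\Delta_\kappa f + o(t)$ together with the bilinearity of $\Gamma$ one gets
\[
\Gamma\bigl(H_\kappa(t)f\bigr) = \Gamma(f) + 2t\,\Gamma(f,\Delta_\kappa f) + o(t),\qquad H_\kappa(t)\Gamma(f) = \Gamma(f) + t\,\Delta_\kappa \Gamma(f) + o(t).
\]
Subtracting and invoking \eqref{Gamma2} yields
\[
H_\kappa(t)\Gamma(f) - \Gamma\bigl(H_\kappa(t)f\bigr) = 2t\,\Gamma_2(f) + o(t) \geq 0.
\]
Dividing by $t > 0$ and letting $t \to 0^+$ gives $\Gamma_2(f) \geq 0$ pointwise.

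The main obstacle will be the justification of the regularity underlying both steps: one needs $u_{t-s} = H_\kappa(t-s)f$ to lie in $C^4$ so that $\Gamma_2(u_{t-s})$ is defined pointwise, one needs to exchange $\partial_s$ with $H_\kappa(s)$ applied to the nonlinear functional $\Gamma(u_{t-s})$, and one needs dominated convergence to extract the pointwise limit in the Taylor argument. These can be handled by invoking the smoothing and kernel-regularity properties of the Dunkl heat semigroup recalled in Section~1 (smooth positive kernel $h_\kappa(t,x,y)$, stochastic completeness \eqref{stoch-complete}, and analyticity on $L^p$), which propagate the compactly-supported smooth initial datum $f$ into the needed smoothness of $u_{t-s}$ uniformly on compact subsets of $(0,t]$.
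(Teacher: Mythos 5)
Your proposal is correct and follows essentially the same route as the paper: the forward implication via the interpolation $s\mapsto H_\kappa(s)\Gamma\bigl(H_\kappa(t-s)f\bigr)$ with derivative $2H_\kappa(s)\Gamma_2\bigl(H_\kappa(t-s)f\bigr)$ is exactly the paper's computation, and your Taylor expansion at $t=0^+$ for the converse is the standard short-time argument the paper leaves implicit ("the equivalence of both assertions is clear"). Your closing remarks on the regularity justifications are a reasonable supplement rather than a departure.
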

\begin{proof}  Let $0\leq s\leq t$. Then, for every $f\in C^{\infty}_c(\R^d)$,
\begin{eqnarray*}
H_\kappa(t)\Gamma(f)-  \Gamma\big(H_\kappa(t)f\big) &=&\int_0^t\frac{\d}{\d s}H_\kappa(s)\Gamma\big(H_\kappa(t-s)f\big)\,\d s\\
&=&\int_0^t H_\kappa(s)\big[\Delta_\kappa\Gamma\big(H_\kappa(t-s)f\big)  \\
&&-  2\Gamma\big(\Delta_\kappa H_\kappa(t-s)f,H_\kappa(t-s)f\big)\big]\,\d s\\
&=&2 \int_0^t H_\kappa(s)\Gamma_2\big(H_\kappa(t-s)f\big)\,\d s,
\end{eqnarray*}
where the second equality can be checked without much effort. Thus, the equivalence of both assertions is clear.
\end{proof}

Define another square function $\tilde{g}(f)$ as
\begin{eqnarray*}
\tilde{g}(f)(x)=\Big(\int_0^\infty H_\kappa(t)\Gamma\big(H_\kappa(t)f\big)\,\d t\Big)^{1/2},\quad x\in\R^d.
\end{eqnarray*}
Then, for every $x\in\R^d$, we have
\begin{eqnarray}\label{square-function-compare}
g_\Gamma(f)^2(x)&=&\int_0^\infty \Gamma\big(H_\kappa(t)f\big)(x)\,\d t\cr
&=&\int_0^\infty \Gamma\big(H_\kappa(t/2)H_\kappa(t/2)f\big)(x)\,\d t\cr
&\leq&\int_0^\infty H_\kappa(t/2)\Gamma\big(H_\kappa(t/2)f\big)(x)\,\d t\cr
&=&2\tilde{g}(f)^2(x),
\end{eqnarray}
where we used Corollary \ref{grad-est} in the above inequality.

Let
\begin{eqnarray*}
\tilde{g}_T(f)(x):=\Big(\int_0^T H_\kappa(t)\Gamma\big(H_\kappa(t)f\big)\,\d t\Big)^{1/2},\quad x\in\R^d,
\end{eqnarray*}
Then, it is immediately to see that, for every $x\in\R^d$, $\tilde{g}_T(f)(x)$ increases to $\tilde{g}(f)(x)$ as $T$ goes to $\infty$. The key point here is that $\tilde{g}_T(f)(x)$ can be expressed as an integral of the conditional expectation of the predictable quadratic variation $\langle\mathcal{N}\rangle_T$ as the next lemma shows. For instance, see \cite{BBL2016} for the case of  pure jump L\'{e}vy processes and \cite{Li2017} for the case of diffusion processes.
\begin{lemma}\label{prob-repr}
Let $T>0$. For every $f\in C_c^\infty(\R^d)$ and every $x\in\R^d$,
\begin{eqnarray}\label{prob-reprez}
\tilde{g}_T(f)(x)=\Big(\frac{1}{2}\int_{\R^d} \E_y\big(\langle\mathcal{N}\rangle_T \big| X_T=x\big)h_\kappa(T,x,y)\,\d\mu_\kappa(y)\Big)^{1/2},
\end{eqnarray}
where $\E_y$ denotes the expectation of the process $(X_t)_{t\geq0}$ starting from $y$.
\end{lemma}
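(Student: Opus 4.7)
The plan is to identify both sides as bilinear functionals against an arbitrary test function $\phi\in C_c^{\infty}(\mathbb{R}^d)$, exploiting the symmetry of $(H_\kappa(t))_{t\ge 0}$ with respect to $\mu_\kappa$ together with the Markov property. The starting point is Lemma \ref{mart}: comparing \eqref{quad-var-N} with the explicit expression \eqref{Gamma} of the carr\'e du champ operator, and noting that $X_{s-}=X_s$ for all but countably many $s$, so the difference is Lebesgue-negligible, I would write
\begin{equation*}
\langle\mathcal{N}\rangle_T \;=\; 2\int_0^T \Gamma\bigl(H_\kappa(T-s)f\bigr)(X_s)\,\d s.
\end{equation*}

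Next, for any $\phi\in C_c^{\infty}(\mathbb{R}^d)$, I would compute $\int_{\mathbb{R}^d}\phi(x)\,A(x)\,\d\mu_\kappa(x)$, where
\begin{equation*}
A(x):=\int_{\mathbb{R}^d}\E_y\bigl(\langle\mathcal{N}\rangle_T\,\big|\,X_T=x\bigr)\,h_\kappa(T,x,y)\,\d\mu_\kappa(y).
\end{equation*}
By the defining property of the conditional expectation and the symmetry $h_\kappa(T,x,y)=h_\kappa(T,y,x)$, this integral equals $\int_{\mathbb{R}^d}\E_y[\langle\mathcal{N}\rangle_T\,\phi(X_T)]\,\d\mu_\kappa(y)$. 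Substituting the above expression for $\langle\mathcal{N}\rangle_T$ and applying Fubini and the Markov property gives
\begin{equation*}
\E_y\bigl[\Gamma\bigl(H_\kappa(T-s)f\bigr)(X_s)\,\phi(X_T)\bigr]
= H_\kappa(s)\Bigl[\Gamma\bigl(H_\kappa(T-s)f\bigr)\cdot H_\kappa(T-s)\phi\Bigr](y).
\end{equation*}

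Integrating in $y$ against $\mu_\kappa$, and using the invariance of $\mu_\kappa$ under $H_\kappa(s)$ (a consequence of stochastic completeness \eqref{stoch-complete} and self-adjointness), followed by the self-adjointness of $H_\kappa(T-s)$, transfers one factor of $H_\kappa(T-s)$ onto $\Gamma(H_\kappa(T-s)f)$. The change of variable $t=T-s$ then yields
\begin{equation*}
\int_{\mathbb{R}^d}\phi(x)\,A(x)\,\d\mu_\kappa(x)
\;=\; 2\int_{\mathbb{R}^d}\phi(x)\int_0^T H_\kappa(t)\,\Gamma\bigl(H_\kappa(t)f\bigr)(x)\,\d t\,\d\mu_\kappa(x)
\;=\; 2\int_{\mathbb{R}^d}\phi(x)\,\tilde{g}_T(f)^2(x)\,\d\mu_\kappa(x).
\end{equation*}
Since $\phi$ is arbitrary, $A(x)=2\tilde{g}_T(f)^2(x)$, which is exactly \eqref{prob-reprez}.

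The main technical point to handle carefully is the symmetrization step: strictly speaking, one should first verify that all quantities are integrable so that Fubini applies (this is immediate as $f\in C_c^{\infty}$, $\phi\in C_c^{\infty}$, $\Gamma(H_\kappa(t)f)$ is bounded on $[0,T]\times\mathbb{R}^d$, and $H_\kappa(t)$ is a contraction on every $L^p(\mu_\kappa)$), and then invoke self-adjointness of $H_\kappa(t)$ in $L^2(\mu_\kappa)$. A secondary, more conceptual worry is that $X$ is a jump process, so one might wonder whether jumps of $\mathcal{N}$ contribute additional terms; however, this is already absorbed in the second sum of \eqref{quad-var-N} and reappears in $\Gamma$ through \eqref{Gamma}, so no further adjustment is needed.
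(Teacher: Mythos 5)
Your argument is correct, but it takes a genuinely different route from the paper's. The paper computes $\tilde g_T(f)^2(x)$ pointwise: it substitutes $t\mapsto T-t$ in the definition, inserts the stochastic completeness identity \eqref{stoch-complete}, and uses the semigroup (Chapman--Kolmogorov) identity to make the Markov-bridge density $h_\kappa(t,y,z)h_\kappa(T-t,z,x)/h_\kappa(T,y,x)$ appear explicitly, reads this off as the regular conditional law of $X_t$ given $X_0=y$ and $X_T=x$, and finally identifies the resulting bracket with $\tfrac12\langle\mathcal N\rangle_T$ via \eqref{quad-var-N}. You instead test against an arbitrary $\phi\in C_c^\infty(\R^d)$ and use only the tower property, the Markov property, the invariance of $\mu_\kappa$ and the self-adjointness of $H_\kappa(t)$; your preliminary identification $\langle\mathcal N\rangle_T=2\int_0^T\Gamma\big(H_\kappa(T-s)f\big)(X_s)\,\d s$ (replacing $X_{s-}$ by $X_s$ off a Lebesgue-null set of times) is exactly the step the paper also performs, just stated more cleanly. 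Your route buys you freedom from ever writing the bridge density, at two small costs worth recording: (i) duality only yields $A=2\tilde g_T(f)^2$ $\mu_\kappa$-a.e., so to obtain the identity ``for every $x\in\R^d$'' as stated you must either verify continuity of both sides in $x$ (for $A$ this essentially forces you back to the bridge-density expression) or settle for the a.e.\ version, which is in any case all that is used in the proof of Theorem \ref{Gamma-LPS-big-p}; (ii) $\E_y(\cdot\,|\,X_T=x)$ is only defined up to null sets of the law of $X_T$ unless a concrete regular version is fixed, and the paper's explicit computation is precisely what fixes that version. Neither point is a gap, but a complete write-up should address them.
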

\begin{proof}
Indeed, by the change-of-variables formula, the stochastic completeness \eqref{stoch-complete} and \eqref{quad-var-N}, we have
\begin{eqnarray*}
&&\tilde{g}_T(f)^2(x)\\
&=&\int_0^T\!\!\!\int_{\R^d}h_\kappa(t,x,z)\Gamma\big(H_\kappa(t)f\big)(z)\,\d\mu_\kappa(z)\d t\\
&=&\int_0^T\!\!\!\int_{\R^d} h_\kappa(T-t,z,x)\Gamma\big(H_\kappa(T-t)f\big)(z)\,\d\mu_\kappa(z)\d t\\
&=&\int_0^T\!\!\!\int_{\R^d} h_\kappa(T-t,z,x)\Gamma\big(H_\kappa(T-t)f\big)(z)\Big(\int_{\R^d}h_\kappa(t,z,y)\,\d\mu_\kappa(y)\Big)
\,\d\mu_\kappa(z)\d t\\
&=&\int_{\R^d}\!\!\Big(\int_0^T\!\!\!\int_{\R^d}\frac{h_\kappa(t,y,z)h_\kappa(T-t,z,x)}{h_\kappa(T,y,x)}\Gamma\big(H_\kappa(T-t)f\big)(z)\,\d\mu_\kappa(z)\d t\Big)h_\kappa(T,y,x)\d\mu_\kappa(y)\\
&=&\int_{\R^d}\E_y\bigg[\int_0^T\Big(\sum_{\alpha\in\mathfrak{R}_+}\kappa_\alpha\,\frac{\big(H_\kappa(T-t)f(X_{t-})-H_\kappa(T-t)f(r_\alpha X_{t-})\big)^2}{\langle\alpha,X_{t-}\rangle^2}\\
&& + |\nabla H_\kappa(T-t)f(X_t)|^2 \Big)\,\d t\Big|X_T=x\bigg]h_\kappa(T,x,y)\,\d\mu_\kappa(y)\\
&=&\frac{1}{2}\int_{\R^d} \E_y\big(\langle\mathcal{N}\rangle_T\big|X_T=x\big)h_\kappa(T,x,y)\,\d\mu_\kappa(y).
\end{eqnarray*}\end{proof}

Now we are ready to present the main result in this section.
\begin{theorem}\label{Gamma-LPS-big-p}
Let $p\in[2,\infty)$. Suppose the Weyl group $G$ is isomorphic to $\mathbb{Z}_2^d$. Then the operator $g_\Gamma$ is bounded in $L^p(\mu_\kappa)$; more precisely, there exists a positive constant $C(p)$ such that
$$\|g_\Gamma(f)\|_{L^p(\mu_\kappa)}\leq C(p)\|f\|_{L^p(\mu_\kappa)},\quad f\in L^p(\mu_\kappa);$$
moreover, $C(p)$ depends only on $p$.
\end{theorem}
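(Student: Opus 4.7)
The plan is to combine the pointwise inequality $g_\Gamma(f)^2\le 2\tilde{g}(f)^2$ from \eqref{square-function-compare} (which is where the hypothesis $G\simeq \mathbb{Z}_2^d$ enters, via Corollary \ref{grad-est} and ultimately Proposition \ref{cur-dim}) with the probabilistic representation of $\tilde{g}_T(f)$ in Lemma \ref{prob-repr} and the Burkholder-Davis-Gundy inequality applied to the c\`{a}dl\`{a}g martingale $(\mathcal{N}_t)_{0\le t\le T}$ of Lemma \ref{mart}. Standard density and the splitting $f=f^+-f^-$ reduce the problem to showing $\|\tilde{g}_T(f)\|_{L^p(\mu_\kappa)}\le C(p)\|f\|_{L^p(\mu_\kappa)}$ for a fixed $f\in C_c^\infty(\R^d)$ uniformly in $T>0$, after which monotone convergence and \eqref{square-function-compare} complete the proof.

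Since $p/2\ge 1$ and stochastic completeness \eqref{stoch-complete} makes $h_\kappa(T,x,y)\,\d\mu_\kappa(y)$ a probability measure in $y$, two successive applications of Jensen's inequality to \eqref{prob-reprez} (first conditionally on $X_T=x$, then against this probability measure in $y$) give
\[
\tilde{g}_T(f)(x)^p\le \frac{1}{2^{p/2}}\int_{\R^d}\E_y\bigl[\langle\mathcal{N}\rangle_T^{p/2}\,\big|\,X_T=x\bigr]h_\kappa(T,x,y)\,\d\mu_\kappa(y).
\]
Integrating against $\d\mu_\kappa(x)$ and using the symmetry $h_\kappa(T,x,y)=h_\kappa(T,y,x)$, Fubini's theorem, and the fact that $h_\kappa(T,y,\cdot)\,\d\mu_\kappa$ is the law of $X_T$ starting from $y$, the right-hand side collapses to $2^{-p/2}\int_{\R^d}\E_y[\langle\mathcal{N}\rangle_T^{p/2}]\,\d\mu_\kappa(y)$.

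For $p\ge 2$, the Burkholder-Davis-Gundy inequality for c\`{a}dl\`{a}g martingales (combined with Lenglart's domination of the predictable quadratic variation by the optional one and Doob's maximal inequality) furnishes a constant $c_p$ depending only on $p$ such that $\E_y[\langle\mathcal{N}\rangle_T^{p/2}]\le c_p\,\E_y[|\mathcal{N}_T|^p]$. Since $\mathcal{N}_T=f(X_T)-H_\kappa(T)f(y)$ when the process starts at $y$, the elementary inequality $|a-b|^p\le 2^{p-1}(|a|^p+|b|^p)$ gives
\[
\E_y[|\mathcal{N}_T|^p]\le 2^{p-1}\bigl(H_\kappa(T)|f|^p(y)+|H_\kappa(T)f(y)|^p\bigr).
\]
Integrating in $y$: the first term integrates to $\|f\|_{L^p(\mu_\kappa)}^p$ by the invariance of $\mu_\kappa$ under $H_\kappa(T)$ (a consequence of the self-adjointness of $\Delta_\kappa$ in $L^2(\mu_\kappa)$), and the second is bounded by $\|f\|_{L^p(\mu_\kappa)}^p$ by the $L^p$-contractivity of $H_\kappa(T)$; neither step introduces any dimensional dependence, yielding $\|\tilde{g}_T(f)\|_{L^p(\mu_\kappa)}\le C(p)\|f\|_{L^p(\mu_\kappa)}$ with $C(p)$ a function of $p$ alone.

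The main technical point to verify is the BDG-type domination of $\langle\mathcal{N}\rangle_T^{p/2}$ by $|\mathcal{N}_T|^p$ in this discontinuous jump-diffusion setting, which differs from the continuous-martingale case of \cite{Li2017,Li2019}: one must pass through the optional quadratic variation $[\mathcal{N}]$ (on which the standard BDG inequality operates for c\`{a}dl\`{a}g martingales) and then invoke Lenglart's inequality to return to $\langle\mathcal{N}\rangle$. The crucial ingredient keeping the overall constant independent of the dimension is the gradient estimate of Corollary \ref{grad-est}, which underpins \eqref{square-function-compare} and requires the hypothesis $G\simeq\mathbb{Z}_2^d$; everything that follows is soft and produces only $p$-dependent factors.
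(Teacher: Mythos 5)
Your proposal is correct and follows essentially the same route as the paper, which itself only sketches the argument by combining \eqref{square-function-compare} and \eqref{prob-reprez} with the Burkholder--Davis--Gundy inequality, monotone convergence, and approximation, deferring details to \cite[Theorem 4.4]{Li2019}. Your write-up supplies exactly those omitted details (the two Jensen applications, the Lenglart/Doob passage from $[\mathcal{N}]$ to $\langle\mathcal{N}\rangle$, and the final $L^p$-contractivity and mass-conservation steps), and correctly identifies where the hypothesis $G\simeq\mathbb{Z}_2^d$ enters.
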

The proof is the same as \cite[Theorem 4.4]{Li2019} by  combining \eqref{square-function-compare} and \eqref{prob-reprez} together, applying the Burkholder--Davis--Gundy inequality and the monotone convergence theorem, and by standard approximation. We omit the details here.

Finally, we give a remark on Theorem \ref{Gamma-LPS-big-p}, pointed out by an anonymous referee.
\begin{remark}
The analytic proof in \cite[pages 52-55]{Stein70} can be possibly adapted to prove that $g_\Gamma$ is bounded in $L^p(\mu_\kappa)$ for all $2<p<\infty$ when $G$ is isomorphic to $\mathbb{Z}_2^d$.
\end{remark}

\section{Proofs of Proposition \ref{cur-dim}}\hskip\parindent
In this section, we present the details on the proof of Proposition \ref{cur-dim}. As an illustration, we first consider the rank-one case following the notations of Example \ref{rank-one} introduced in Section 1.

\begin{corollary}\label{R-Gamma2}
In the case of Example \ref{rank-one} with $\kappa\in\R$, for every $f\in C^4(\R)$,
\begin{eqnarray}\label{R-Gamma2-0}
\Gamma_2(f)(x)&=&f''(x)^2+\frac{\kappa}{x^2}\Big[f'(x)+f'(-x)-\frac{f(x)-f(-x)}{x}\Big]^2\cr
&&+\frac{\kappa}{2x^2}\Big[2f'(x)-\frac{f(x)-f(-x)}{x}\Big]^2,\quad x\in\R.
\end{eqnarray}
\end{corollary}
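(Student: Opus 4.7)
The plan is a direct computation leveraging the explicit one-dimensional formulas. From Example \ref{rank-one} and \eqref{Gamma},
$$\Gamma(f)(x) = f'(x)^2 + \frac{\kappa}{2x^2}\bigl(f(x)-f(-x)\bigr)^2, \quad \Delta_\kappa f(x) = f''(x)+\frac{\kappa}{x^2}\bigl[f(-x)-f(x)+2xf'(x)\bigr],$$
so the quantity $2\Gamma_2(f) = \Delta_\kappa\bigl(\Gamma(f)\bigr) - 2\Gamma(\Delta_\kappa f,f)$ can be evaluated piece by piece and then combined. Setting $u(x) := f(x)-f(-x)$, which is odd with $u'(x) = f'(x)+f'(-x)$, should streamline the bookkeeping throughout.

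First I would apply $\Delta_\kappa$ to each summand of $\Gamma(f)$. The classical piece $f'(x)^2$ contributes the expected $2f''(x)^2 + 2f'(x)f'''(x) + 4\kappa f'(x)f''(x)/x$ together with a reflection remainder $\kappa(f'(-x)^2 - f'(x)^2)/x^2$, while the non-local piece $\kappa u(x)^2/(2x^2)$ is even, so its reflection difference vanishes and only its differential part survives, producing an explicit combination of $u'(x)^2$, $u(x)u''(x)$, $u(x)u'(x)/x$, and $u(x)^2/x^2$ with $\kappa$ and $\kappa^2$ coefficients. Second, I would compute $\Gamma(\Delta_\kappa f, f)(x)$ by substituting $\Delta_\kappa f$ into the bilinear formula: parity reduces the antisymmetric combination $(\Delta_\kappa f)(x)-(\Delta_\kappa f)(-x)$ to $u''(x) + (2\kappa/x)[u'(x) - u(x)/x]$, so the non-local contribution to $\Gamma(\Delta_\kappa f, f)$ collapses neatly.

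The crux, and the main obstacle, is the algebraic reorganization of the difference of these two expressions. A careful accounting shows that all terms containing $f'''$, all $\kappa f'(x)f''(x)/x$ terms, all $\kappa u(x) u''(x)/x^2$ terms, and, crucially, every $\kappa^2$ contribution cancel pairwise, leaving
$$2\Gamma_2(f)(x) = 2f''(x)^2 + \frac{2\kappa(f'(x)+f'(-x))^2}{x^2} + \frac{4\kappa f'(x)^2}{x^2} - \frac{4\kappa u(x)[2f'(x)+f'(-x)]}{x^3} + \frac{3\kappa u(x)^2}{x^4}.$$
The final step is to recognize this as
$$2f''(x)^2 + \frac{2\kappa}{x^2}\Big[f'(x)+f'(-x) - \frac{u(x)}{x}\Big]^2 + \frac{\kappa}{x^2}\Big[2f'(x) - \frac{u(x)}{x}\Big]^2,$$
from which \eqref{R-Gamma2-0} follows upon division by $2$. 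Identifying the correct decomposition into these two specific perfect squares is the qualitative observation that carries the proof; once it is anticipated, matching the coefficients of $f'(x)^2$, $f'(-x)^2$, $f'(x)f'(-x)$, $u(x)f'(x)/x$, $u(x)f'(-x)/x$, and $u(x)^2/x^2$ between the two expressions is a short verification.
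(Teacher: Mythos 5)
Your proposal is correct and follows essentially the same route as the paper: compute $\Delta_\kappa\Gamma(f)$ and $\Gamma(\Delta_\kappa f,f)$ explicitly from the one-dimensional formulas, observe the cancellation of the $f'''$, $f'f''/x$, $uu''/x^2$ and all $\kappa^2$ terms, and reassemble the remainder into the two perfect squares. Your intermediate expression for $2\Gamma_2(f)$ agrees exactly with what the paper's \eqref{R-Gamma2-2} and \eqref{R-Gamma2-3} yield, so only the routine bookkeeping is left implicit.
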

\begin{proof}
By \eqref{Gamma2}, we see
\begin{eqnarray}\label{R-Gamma2-1}
\Gamma_2(f)(x)=\frac{1}{2}\Delta_\kappa\Gamma(f)(x)-\Gamma(\Delta_\kappa f, f)(x).
\end{eqnarray}

We calculate the two terms on the right hand side of \eqref{R-Gamma2-1}. By \eqref{Gamma}, direct calculation leads to
\begin{eqnarray}\label{R-Gamma2-2}
\Delta_\kappa\Gamma(f)(x)&=&\Gamma(f)''(x)+\frac{\kappa}{x^2}\big[2x\Gamma(f)'(x)+\Gamma(f)(-x)-\Gamma(f)(x)\big]\cr
&=&2f''(x)^2+2f'(x)f'''(x)+\frac{4\kappa}{x}f'(x)f''(x)\cr
&&+\frac{\kappa}{x^2}\Big\{\big[f'(-x)^2-f'(x)^2\big]
+\big[f(x)-f(-x)\big]\big[f''(x)-f''(-x)\big] +\big[f'(-x)+f'(x)\big]^2\Big\}\cr
&&+2\frac{\kappa^2-2\kappa}{x^3}\big[f(x)-f(-x)\big]\big[f'(x)+f'(-x)\big]\cr
&&+\frac{3\kappa-2\kappa^2}{x^4}\big[f(x)-f(-x)\big]^2.
\end{eqnarray}

Similarly, by \eqref{Gamma}, \eqref{Delta} and direct calculation, we have
\begin{eqnarray}\label{R-Gamma2-3}
\Gamma(\Delta_\kappa f, f)(x) &=& (\Delta_\kappa f)'(x)f'(x)+\frac{\kappa}{2x^2}\big[f(x)-f(-x)\big]\big[(\Delta_\kappa f)(x)-(\Delta_\kappa f)(-x)\big]\cr
&=&f'(x)f'''(x)+\frac{2\kappa}{x} f'(x)f''(x)\cr
&&+\frac{\kappa}{2x^2}\Big\{-6f'(x)^2-2f'(x)f'(-x)
+\big[f(x)-f(-x)\big]\big[f''(x)-f''(-x)\big]\Big\}\cr
&&+ \frac{1}{x^3}\Big\{\kappa^2\big[f(x)-f(-x)\big]\big[f'(x)+f'(-x)\big]+ 2\kappa f'(x)\big[f(x)-f(-x)\big]\Big\}\cr &&-\frac{\kappa^2}{x^4}\big[f(x)-f(-x)\big]^2.
\end{eqnarray}

Thus, combining \eqref{R-Gamma2-1}, \eqref{R-Gamma2-2} and \eqref{R-Gamma2-3}, we obtain
\begin{eqnarray*}
\Gamma_2(f)(x)&=&f''(x)^2+ \kappa\bigg\{ \frac{\big[f'(x)+f'(-x)\big]^2}{x^2}+\frac{\big[f(x)-f(-x)\big]^2}{x^4}\\
&&-2\frac{\big[f(x)-f(-x)\big]\big[f'(x)+f'(-x)\big]}{x^3}\bigg\}+\frac{\kappa}{2}\bigg\{\frac{\big[f(x)-f(-x)\big]^2}{x^4}\\
&& +4\frac{f'(x)^2}{x^2} -4\frac{f'(x)\big[f(x)-f(-x)\big]}{x^3}\bigg\}\\
&=&f''(x)^2+\frac{\kappa}{x^2}\Big[f'(x)+f'(-x)-\frac{f(x)-f(-x)}{x}\Big]^2\\
&&+\frac{\kappa}{2x^2}\Big[2f'(x)-\frac{f(x)-f(-x)}{x}\Big]^2.
\end{eqnarray*}
We finish the proof.
\end{proof}

Now we are ready to present the proof on Proposition \ref{Gamma2} which is much more complicated than the proof of Corollary \ref{R-Gamma2}. Assume that the Weyl group $G$ is isomorphic to $\mathbb{Z}_2^d$. The crucial fact is that, in this case, we have
\begin{eqnarray}\label{ortho}
    	\langle\alpha,\beta\rangle=0,\quad\mbox{for every }\alpha,\beta\in\mathfrak{R}_+\mbox{ with }\alpha\neq\beta.
    \end{eqnarray}
\begin{proof}[Proof of Proposition \ref{cur-dim}]
Let $x\in\R^d$, $\alpha=(\alpha_1,\cdots,\alpha_d)\in\mathfrak{R}_+$ and $y=(y_1,\cdots,y_d)\in\R^d$. We need the following elementary facts.
\begin{eqnarray*}
&&\Delta|\nabla f(x)|^2-2\langle \nabla \Delta f(x),\nabla f(x)\rangle=2\|\Hess(f)(x)\|_{{\rm HS}}^2;\\
&&\nabla \frac{1}{\langle\alpha,\cdot\rangle(x)}=-\frac{\alpha}{\langle\alpha,x\rangle^2},\quad
\nabla \frac{1}{\langle\alpha,\cdot\rangle^2(x)}=-\frac{2\alpha}{\langle\alpha,x\rangle^3},\quad\langle\alpha,x\rangle\neq0;\\
&&\Delta \frac{1}{\langle\alpha,\cdot\rangle^2(x)}=\frac{12}{\langle\alpha,x\rangle^4},\quad \langle\alpha,x\rangle\neq0;\\
&&\nabla[f(\cdot)-f(r_{\alpha}\cdot)](x)=\nabla f(x)-\nabla f(r_{\alpha}x)+\langle \nabla f(r_{\alpha}x),\alpha\rangle\alpha;\\
&&\Delta[f(\cdot)-f(r_{\alpha}\cdot)](x)=\Delta f(x)-\Delta f(r_{\alpha}x);\\
&&\big\langle \nabla|\nabla f(\cdot)|^2(x),y\big\rangle=2\sum_{ i,j=1}^d \partial_{x_i}f(x)\partial_{x_ix_j}^2f(x)y_j;\\
&&\big\langle\nabla\langle\nabla f(\cdot),\alpha\rangle(x), y\big\rangle=\sum_{ i,j=1}^d \alpha_i\partial^2_{x_ix_j}f(x)y_j.
\end{eqnarray*}

Recall that $|\alpha|=\sqrt{2}$ and $\delta_\alpha f(x):=[f(x)-f(r_\alpha x)]/\langle \alpha,x\rangle$ (see Remark \ref{remark}(i)).
With the above facts, by \eqref{Gamma2}, \eqref{Gamma} and \eqref{Delta}, we do the similar calculation as the proof of Corollary \ref{R-Gamma2}. However, due to the mixture of local and non-local terms in \eqref{Delta} and \eqref{Gamma}, the non-local term in the expression of $\Gamma_2(f)$ consists of two summations: one is over $\alpha\in\mathfrak{R}_+$ and the other is over $\alpha,\beta\in\mathfrak{R}_+$. Indeed,
\begin{eqnarray}\label{Gamma2-1}
	\Gamma_2(f)(x)&=&\|\Hess(f)(x)\|_{\rm HS}^2\cr
	&+&\sum_{ \alpha\in \mathfrak{R}_+} \frac{2\kappa_{\alpha}}{\langle \alpha,x\rangle^2 }\left\{\left| \delta_{\alpha}f(x)\alpha-\nabla[f(x)-f(r_{\alpha}x) ]  \right|^2  +\left| \langle \nabla f(x),\alpha\rangle - \delta_{\alpha}f(x) \right|^2
	\right\}\cr
	&+&\sum_{\alpha,\beta\in \mathfrak{R}_+} \frac{\kappa_{\alpha}\kappa_{\beta} }{\langle \beta,x\rangle^2 }
	\bigg\{   -\frac{2[\delta_{\alpha}f(x)]^2}{\langle\alpha,x\rangle}\langle\alpha,\beta\rangle \langle \beta,x\rangle  -[\delta_{\alpha}f(x)]^2  +[\delta_{\alpha}f(r_{\beta}x)]^2\cr
&+&\frac{2 \delta_{\alpha}f(x)}{\langle\alpha,x\rangle}\langle\nabla [f(x)-f(r_{\alpha}x)],\beta\rangle\langle \beta,x\rangle \cr
	&-&2\Big[\Big( \frac{\langle \nabla f(x),\alpha\rangle }{\langle\alpha,x\rangle}-\frac{\delta_{\alpha}f(x) }{\langle\alpha,x\rangle} \Big)- \Big( \frac{\langle \nabla f(r_{\beta}x),\alpha\rangle }{\langle\alpha,r_{\beta}x\rangle}-\frac{ \delta_{\alpha} f(r_{\beta}x)  }{\langle\alpha,r_{\beta}x\rangle} \Big)    \Big] [f(x)-f(r_{\beta}x)]\bigg\}\cr
	&=:&\|\Hess(f)(x)\|_{\rm HS}^2+A+B,
\end{eqnarray}
where $A$ denotes the summation of terms over $\alpha\in\mathfrak{R}_+$, and $B$ denotes the summation of terms over $\alpha,\beta\in\mathfrak{R}_+$.
 (In the rank-one case, $B=0$ and $A$ reduces to the sum of the last two terms in the right hand side of \eqref{R-Gamma2-0}.)

For $B$, we further split it into two summations: one is over $\alpha,\beta\in\mathfrak{R}_+$ with $\alpha=\beta$, and the other is over $\alpha,\beta\in\mathfrak{R}_+$ with $\alpha\neq\beta$. Using the facts that $|\alpha|=\sqrt{2}$, $\langle \alpha ,r_{\alpha}x\rangle =-\langle \alpha,x\rangle$, and $r_{\alpha} r_{\alpha}x=x$ for all $\alpha\in\mathfrak{R}_+$,  it is easy to see that the summation over $\alpha,\beta\in\mathfrak{R}_+$ with $\alpha=\beta$ equals $0$. For any $\alpha,\beta\in\mathfrak{R}_+$ with $\alpha\neq\beta$, by \eqref{ortho}, we have $\langle \alpha, r_{\beta}x\rangle=\langle \alpha,x\rangle$. Thus,
    \begin{eqnarray}\label{B}
        B&=&\sum_{\alpha,\beta\in \mathfrak{R}_+,\,\alpha\neq \beta} \frac{\kappa_{\alpha}\kappa_{\beta} }{\langle \beta,x\rangle^2 }
        \bigg\{   \frac{2 \delta_{\alpha}f(x)}{\langle\alpha,x\rangle}\langle\nabla [f(x)-f(r_{\alpha}x)],\beta\rangle\langle \beta,x\rangle -[\delta_{\alpha}f(x)]^2  +[\delta_{\alpha}f(r_{\beta}x)]^2  \cr
        && \qquad -2\Big[\Big( \frac{\langle \nabla f(x),\alpha\rangle }{\langle\alpha,x\rangle}-\frac{\delta_{\alpha}f(x) }{\langle\alpha,x\rangle} \Big)- \Big( \frac{\langle \nabla f(r_{\beta}x),\alpha\rangle }{\langle\alpha, x\rangle}-\frac{ \delta_{\alpha} f(r_{\beta}x)  }{\langle\alpha, x\rangle} \Big)    \Big] [f(x)-f(r_{\beta}x)]\bigg\}.
    \end{eqnarray}

Further split the right hand side of \eqref{B} into two parts: one is the summation of terms involving the gradient $\nabla$, denoted by $B_1$, and the other is the summation of the remaining terms in the right hand of \eqref{B}, denoted by $B_2$.

    Consider $B_1$. For any $\alpha,\beta\in \mathfrak{R}_+$ with $\alpha\neq \beta$, by \eqref{ortho} again, it is clear that $\langle \nabla [f(r_{\beta}x)],\alpha\rangle=\langle \nabla f(r_{\beta}x),\alpha\rangle$. Then
    \begin{eqnarray}\label{B1}
    	B_1
    	&=&\sum_{\alpha,\beta\in \mathfrak{R}_+,\,\alpha\neq \beta} \frac{\kappa_{\alpha}\kappa_{\beta} }{\langle \beta,x\rangle^2 }
    	\bigg\{   \frac{2 \delta_{\alpha}f(x)}{\langle\alpha,x\rangle}\langle\nabla [f(x)-f(r_{\alpha}x)],\beta\rangle\langle \beta,x\rangle  \cr
    	&&\qquad  -2\Big[  \frac{\langle \nabla f(x),\alpha\rangle }{\langle\alpha,x\rangle} - \frac{\langle \nabla f(r_{\beta}x),\alpha\rangle }{\langle\alpha, x\rangle}   \Big] [f(x)-f(r_{\beta}x)]\bigg\}\cr
    	&=&\sum_{\alpha,\beta\in \mathfrak{R}_+,\,\alpha\neq \beta}  \frac{4\kappa_{\alpha}\kappa_{\beta}}{\langle \alpha,x\rangle \langle \beta,x\rangle}
    	\Big\{    \delta_{\alpha}f(x)  \langle\nabla [f(x)-f(r_{\alpha}x)],\beta\rangle  -  \delta_{\beta}f(x)   \langle \nabla [f(x)-f(r_{\beta}x)], \alpha\rangle \Big\}\cr
    	&=&0,
    \end{eqnarray}
where we used the symmetry of $\alpha$ and $\beta$ in the last equality.

    Consider $B_2$. For any $\alpha,\beta\in \mathfrak{R}_+$ with $\alpha\neq \beta$, again by \eqref{ortho}, $r_{\alpha}r_{\beta}x=r_{\beta}r_{\alpha} x$ and $\langle \alpha, r_{\beta}x\rangle=\langle \alpha,x\rangle$. Then, we obtain
    \begin{eqnarray}\label{B2}
    	B_2
    	&=&\sum_{\alpha,\beta\in \mathfrak{R}_+,\,\alpha\neq \beta} \frac{\kappa_{\alpha}\kappa_{\beta} }{\langle \beta,x\rangle^2 }
    	\bigg\{ [\delta_{\alpha}f(r_{\beta}x)]^2  -[\delta_{\alpha}f(x)]^2   -2\Big[-\frac{\delta_{\alpha}f(x) }{\langle\alpha,x\rangle} +\frac{ \delta_{\alpha} f(r_{\beta}x)  }{\langle\alpha, x\rangle}     \Big] [f(x)-f(r_{\beta}x)]\bigg\}\cr
    	&=&\sum_{\alpha,\beta\in \mathfrak{R}_+,\,\alpha\neq \beta} \frac{\kappa_{\alpha}\kappa_{\beta} }{\langle \alpha,x\rangle^2 \langle \beta,x\rangle^2 }
    	\Big\{   -[f(x)-f(r_{\alpha}x) ]^2  +[f(r_{\beta}x)-f(r_{\alpha}r_{\beta}x)]^2 \cr
    	&&\qquad -2\big[-\big(f(x)-f(r_{\alpha}x) \big) + \big(f(r_{\beta}x)-f(r_{\alpha}r_{\beta}x)\big)  \big] [f(x)-f(r_{\beta}x)]\Big\}\cr
    	&=&\sum_{\alpha,\beta\in \mathfrak{R}_+,\,\alpha\neq \beta} \frac{\kappa_{\alpha} \kappa_{\beta} }{ \langle \alpha,x\rangle^2\langle \beta,x\rangle^2 }
    \big[ f(x)-f(r_{\alpha}x)-f(r_{\beta}x)+f(r_{\alpha} r_{\beta}x)  \big]\cr
    &&\qquad \times\big[ f(x)+f(r_{\alpha}x)-3f(r_{\beta}x)+f(r_{\alpha} r_{\beta}x)  \big]\cr
    	&=&\sum_{ \alpha,\beta\in \mathfrak{R}_+,\,\alpha\neq \beta} \frac{\kappa_{\alpha} \kappa_{\beta} }{ \langle \alpha,x\rangle^2\langle \beta,x\rangle^2 } \big[ f(x)-f(r_{\alpha}x)-f(r_{\beta}x)+f(r_{\alpha} r_{\beta}x)   \big]^2,
    \end{eqnarray}
where, in the last equality, we first used the symmetry of $\alpha$ and $\beta$ and then took the average.

  Therefore, combining \eqref{Gamma2-1}, \eqref{B}, \eqref{B1} and \eqref{B2}, we finally arrive at
  \begin{eqnarray*}
	\Gamma_2(f)(x)&=&\|\Hess(f)(x)\|_{\rm HS}^2\cr
	&&\quad+\sum_{ \alpha\in \mathfrak{R}_+} \frac{2\kappa_{\alpha}}{\langle \alpha,x\rangle^2 }\left(\left| \delta_{\alpha}f(x)\alpha-\nabla[f(x)-f(r_{\alpha}x) ]  \right|^2  +\left| \langle \nabla f(x),\alpha\rangle - \delta_{\alpha}f(x) \right|^2
	\right)\cr
&&\quad+\sum_{\alpha,\beta\in \mathfrak{R}_+,\,\alpha\neq \beta} \frac{\kappa_{\alpha} \kappa_{\beta} }{ \langle \alpha,x\rangle^2\langle \beta,x\rangle^2 } \big[ f(x)-f(r_{\alpha}x)-f(r_{\beta}x)+f(r_{\alpha} r_{\beta}x)   \big]^2\cr
&\geq& \|\Hess(f)(x)\|_{\rm HS}^2,	
\end{eqnarray*}
which completes the proof of Proposition \ref{cur-dim}.
\end{proof}

\begin{remark} In the above proof, assumption \eqref{ortho} on the Weyl group $G$ is employed only to show that ${B}\geq0$. With \eqref{ortho}, the terms like
$$\langle\alpha,\beta\rangle\frac{\langle \alpha,x\rangle}{\langle \beta,x\rangle}|f(x)-f(r_{\alpha}x)|^2   ,\quad f(r_\alpha x)\big[f(r_{\beta} r_{\alpha}x)-   f(r_{\alpha}r_{\beta}x)\big]$$
clearly disappear. However, without \eqref{ortho}, it seems difficult to deal with the above terms and we do not know the sign of $B$.
It seems interesting to find or construct an example such that $B_2$ is negative, which essentially results in the negativity of $\Gamma_2$.
\end{remark}

\subsection*{Acknowledgment}\hskip\parindent
The authors much appreciate the anonymous referees for their careful corrections, helpful suggestions and comments, which improve the manuscript a lot. The authors also thank Prof.  B\'{e}chir Amri for kindly sending them a copy of reference \cite{AmSi2012}.  The first named author would like to thank Prof. Hong-Quan Li (from Fudan University) for introducing the Dunkl operator to him, and to acknowledge the financial supports from the National Natural Science Foundation of China (Nos. 11571347 and 11831014). The second named author is supported by the National Natural Science Foundation of China (No.11801404).

\end{document}